\definecolor{dark_purple}{rgb}{0.4, 0.0, 0.4}
\newtheorem{theorem}{Theorem}[section]
\newtheorem{proposition}{Proposition}[section]
\newtheorem{lemma}{Lemma}[section]
\newtheorem{definition}{Definition}[section]
\newtheorem{remark}{Remark}[section]
\def\Cc{{\cal C}}
\def\Cc{{\cal C}}
\def\Bc{{\mathcal B}}
\def\Hc{{\mathcal H}}
\def\Dc{{\mathcal D}}
\def\Kc{{\mathcal K}}
\def\Wc{{\mathcal W}}
\def\Cc{{\cal C}}
\def\Cc{\mathcal{C}}
\begin{document}

\title[On the Brown-York quasi-local mass]{On the limiting behavior  of the Brown-York quasi-local mass in asymptotically hyperbolic manifolds}

\author{Ezequiel Barbosa}
\address{Universidade Federal de Minas Gerais, Departmento de Matem\'atica, Caixa Postal 702, 30123-970, Belo Horizonte, MG, Brazil}
\email{ezequiel@mat.ufmg.br}
\author{Levi Lopes de Lima}
\address{Universidade Federal d Cear\'a,
Departamento de Matem\'atica, Campus do Pici, Av. Humberto Monte, s/n, Bloco 914, 60455-760,
Fortaleza/CE, Brazil.}
\email{levi@mat.ufc.br}
\email{fred@mat.ufc.br}
\author{Frederico Gir\~ao}

\thanks{The first  and second authors were partially supported by CNPq/Brazil.}

\begin{abstract}
We show that the limit at infinity of the vector-valued Brown-York-type quasi-local mass along any coordinate exhaustion of an asymptotically hyperbolic $3$-manifold satisfying the relevant energy condition on the scalar curvature has the conjectured causal character. Our proof uses spinors and relies on  a Witten-type formula expressing the asymptotic limit of this quasi-local mass 
as a bulk integral which manifestly has the right sign under the above assumptions. 
In the spirit of recent work by Hijazi, Montiel and Raulot, we also provide another proof of this result which uses the theory of boundary value problems for Dirac operators on compact domains to 
show that a certain quasi-local mass, which converges to the Brown-York mass in the asymptotic limit,  has the expected causal character under suitable geometric assumptions.
\end{abstract}

\maketitle

\section{Introduction}\label{int}

Let $\overline M$ be a compact $3$-manifold with boundary $S$ and let $M$ be
the interior of $\overline M$. A nonnegative smooth function $\rho:\overline M\to \mathbb R$ such that $\rho^{-1}(0)=S$ and $d\rho|_S\neq 0$ is called a {\em defining function}. Let $g$ be a Riemannian metric on $M$. We say that $(M,g)$ is {\em conformally compact} if for any defining function $\rho$ the metric $\overline g=\rho^2g$ extends to a smooth metric on $\overline M$. The restriction $\overline g|_S$ defines a metric which changes by a conformal factor if the defining function is changed.
Thus, the conformal class of $\overline g|_S$, called the {\em conformal infinity} of $(M,g)$, is well defined

We say that $(M,g)$ as above is {\em weakly asymptotically hyperbolic} if $|d\rho|_{\overline g}=1$ along $S$.
This means that the sectional curvatures of $(M,g)$ converge to $-1$ as one approaches $S$. In this case, if $h_0$ is a metric on $S$ in  the given conformal class, there is a unique defining function $\rho$ in a collar neighborhood of $S$ so that
\begin{equation}\label{expan}
g=\sinh^{-2}\rho\left(d\rho^2+h_{\rho}\right),
\end{equation}
where $h_{\rho}$ is a $\rho$-dependent family of metrics with $h_{\rho}|_{\rho=0}=h_0$; see  \cite{AD} and \cite{MP} for further details.
We will denote by $\nabla$ the Levi-Civita connection of $g$.

\begin{definition}\label{defasym2}
Under the conditions above, we say that $(M,g)$ is {\em asymptotically hyperbolic} if $h_0$ is a round metric on the sphere $\mathbb S^2$ and if, in a collar neighborhood of the conformal infinity, the following asymptotic expansion holds as $\rho\to 0$:
\begin{equation}\label{asymexpcoll}
h_{\rho}=h_0+\frac{\rho^3}{3}h+e,
\end{equation}
where $h$ is a symmetric $2$-tensor on $\mathbb S^{2}$ and the remainder $e$
satisfies 
\begin{equation}
\label{remainder}
|e|+|\nabla e|+|\nabla^2e |+|\nabla^3 e|=O(\rho^4).
\end{equation}
\end{definition}

The tensor $h$ above measures the  deviation of $g$ to the background hyperbolic metric $g_0=\sinh^{-2}(d\rho^2+h_0)$ as $\rho\to 0$. This led Wang \cite{Wa} to define the {\em mass} of $(M,g)$
as
\begin{equation}\label{massvec}
\Upsilon_{(M,g)}=\frac{1}{16\pi}\left(\int_{\mathbb S^{2}}x{\rm tr}_{h_0}hd{\mu}_{h_0},\int_{\mathbb S^{2}}{\rm tr}_{h_0}hd{\mu}_{h_0}\right),
\end{equation}
where $\mu_{h_0}$ is the area element of $h_0$.
This should be thought of as  an element of $\mathbb R^{3,1}$, the Minkowski space with coordinates $(x,t)\in\mathbb R^3\times\mathbb R$ and
Lorentzian metric
\begin{equation}\label{inner}
ds^2_L=-dt^2+dx^2.
\end{equation}
If $\langle\langle\,,\rangle\rangle$ is the pairing associated with (\ref{inner}), it is proved that
the quantity
\[
\langle\langle\Upsilon_{(M,g)},\Upsilon_{(M,g)}\rangle\rangle
\]
does {\em not} depend on the involved choices and therefore is an invariant of the asymptotic geometry of $(M,g)$; see \cite{Wa}, \cite{CH} and  \cite{H}. If we define the 
{\em future directed light cone} by
\[
\mathcal C^+=\{(x,t)\in \mathbb R^{3,1};t>0, \langle\langle(x,t),(x,t)\rangle\rangle=0\},
\]
and 
denote by $R_g$ the scalar curvature of $(M,g)$ then the corresponding  Positive Mass Theorem can be formulated as follows. 

\begin{theorem}\label{pmt}
\cite{Wa} If $(M,g)$ is an asymptotically hyperbolic $3$-manifold satisfying
$R_g\geq -6$ everywhere then
$\Upsilon_{(M,g)}$ is time-like and future directed unless $(M,g)$ is isometric to hyperbolic $3$-space $\mathbb H^3$ with the standard metric, in which case $\Upsilon_{(M,g)}$ vanishes. Equivalently,
\begin{equation}\label{massineq}
\langle\langle \Upsilon_{(M,g)},\eta\rangle\rangle\leq 0,
\end{equation}
for any $\eta\in\Cc^+$,
with the equality occurring for some $\eta$ if and only if $\Upsilon_{(M,g)}$ vanishes and $(M,g)$ is hyperbolic space.
\end{theorem}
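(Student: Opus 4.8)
The plan is to prove this via a Witten-type argument using imaginary Killing spinors, adapting the classical Witten proof of the asymptotically flat PMT. Let me sketch the key steps.

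First, I would identify the spinor structure: on hyperbolic space $\mathbb H^3$ the relevant spinors are "imaginary Killing spinors" satisfying $\nabla_X \phi = \pm \frac{i}{2} X \cdot \phi$. The space of such spinors on $\mathbb H^3$ is a real vector space whose identification with $\mathbb R^{3,1}$ provides the link to the vector-valued mass $\Upsilon$.

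The core step is the modified Witten-Lichnerowicz formula. For a spinor $\psi$ satisfying the modified Dirac equation $\widehat{D}\psi := (D \mp \frac{3i}{2})\psi = 0$, where $D$ is the Dirac operator, one establishes
\begin{equation*}
\int_{\partial M_\rho} \langle\langle \ldots \rangle\rangle = \int_{M_\rho}\left( |\widehat\nabla \psi|^2 + \frac{1}{4}(R_g + 6)|\psi|^2\right),
\end{equation*}
where $\widehat\nabla_X = \nabla_X \mp \frac{i}{2}X\cdot$ is the modified connection and $M_\rho$ is the region bounded by a level set of the defining function. The sign condition $R_g \geq -6$ makes the bulk integrand nonnegative, which is the whole point.

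Next I would analyze the boundary term. One must show that as $\rho \to 0$, solving $\widehat D \psi = 0$ with $\psi$ asymptotic to a fixed imaginary Killing spinor $\phi_0$ on $\mathbb H^3$, the boundary integral $\int_{\partial M_\rho}$ converges to the pairing $\langle\langle \Upsilon_{(M,g)}, \eta_{\phi_0}\rangle\rangle$ (up to a positive constant), where $\eta_{\phi_0} \in \mathcal C^+$ is the null vector corresponding to $\phi_0$. This requires the precise asymptotic expansion \eqref{asymexpcoll}–\eqref{remainder}, a careful choice of asymptotic gauge for the spinor, and the identification of the Wang-type mass integrals \eqref{massvec} with the spinorial boundary charges. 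The key analytic input is an existence theorem: the modified Dirac operator $\widehat D$ on the complete manifold $(M,g)$ is invertible on suitable weighted Sobolev spaces, so one can solve $\widehat D\psi = 0$ with $\psi - \phi_0 \to 0$ at the prescribed rate.

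Putting it together: choose $\psi$ solving the modified Dirac equation asymptotic to the imaginary Killing spinor $\phi_0$ with $\eta_{\phi_0} = \eta$. The integral formula gives $c\langle\langle \Upsilon_{(M,g)}, \eta\rangle\rangle = -\int_M(|\widehat\nabla\psi|^2 + \frac14(R_g+6)|\psi|^2) \leq 0$ for a positive constant $c$, yielding \eqref{massineq}. For the rigidity statement, equality forces $\widehat\nabla\psi \equiv 0$, i.e., the existence of a full space of imaginary Killing spinors, which by a standard argument forces $(M,g)$ to have constant sectional curvature $-1$ and be complete and simply connected (given the asymptotic topology), hence isometric to $\mathbb H^3$; one then checks $\Upsilon$ vanishes.

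The main obstacle I anticipate is twofold: (i) the analytic existence/regularity theory for the modified Dirac operator on asymptotically hyperbolic manifolds, where the weighted spaces and indicial roots must be handled carefully because the decay rates here are borderline; and (ii) the delicate asymptotic computation identifying the spinorial boundary integral with the components of Wang's mass vector $\Upsilon$—this requires tracking the $\rho^3 h$ term through the connection coefficients and the Clifford action precisely, and is where the specific form of Definition \ref{defasym2} is essential.
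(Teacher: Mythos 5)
Your sketch is essentially the Witten-type proof given by Wang in \cite{Wa} (and refined by Chru\'sciel--Herzlich \cite{CH}), which the paper cites rather than reproves outright; however, the same machinery is redeveloped in Section~\ref{proofmain1} to prove Theorem~\ref{mmain3}, and Remark~\ref{corolll} observes that specializing formula~(\ref{newproof2}) to the Kwong--Tam normalized embeddings recovers exactly the mass formula~(\ref{newproof22}) from which Theorem~\ref{pmt} follows. So your approach matches the paper's treatment.

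One small imprecision in your rigidity step: equality for a single $\eta\in\Cc^+$ gives a \emph{single} nontrivial imaginary Killing spinor on $(M,g)$, not a full basis of them. The conclusion $(M,g)\cong\mathbb H^3$ then comes from Baum's classification of complete Riemannian spin manifolds admitting a nontrivial imaginary Killing spinor (cited in the paper via \cite{BFGK}, \cite{AD}), together with the prescribed asymptotic structure to rule out the other warped-product possibilities --- not from having a ``full space'' of Killing spinors. The mechanism you describe would work, but it is stronger than what the argument actually furnishes.
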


\begin{remark}\label{norm}{\rm
Under the conditions of the theorem, the {\em total mass} of $(M,g)$,
\begin{equation}\label{massdef}
\mathfrak m_{(M,g)}=\sqrt{-\langle\langle \Upsilon_{(M,g)},\Upsilon_{(M,g)}\rangle\rangle},
\end{equation}
is well defined.
Thus, $\mathfrak m_{(M,g)}\geq 0$ and $\mathfrak m_{(M,g)}= 0$ only if $(M,g)$ is hyperbolic space.
Physically, this invariant may be interpreted as the total mass associated to a time-symmetric solution $(\overline M,\overline g)$ of Einstein field equations with a negative cosmological constant $\Lambda<0$ having $(M,g)$ as initial data set. This means that $(M,g)$ embeds as a totally geodesic space-like slice of $(\overline M,\overline g)$, which models an isolated gravitational system. As usual, we assume that  $(\overline M,\overline g)$ satisfies the dominant energy condition along $M$, so  the constraint equations in General Relativity imply that  $R_g\geq 6\Lambda$ \cite{HE}. In Theorem \ref{pmt} and throughout the text we use the normalization $\Lambda=-1$.}
\end{remark}

Theorem \ref{pmt} has been proved by Wang \cite{Wa}, who actually established the result in any dimension $n\geq 3$ assuming that $M$ is spin by using a variant of Witten's method; see also \cite{CH}, where a more general result is proved in the spin category, and \cite{ACG} for a proof of a similar result without the spin assumption in the dimensional range $3\leq n\leq 7$.
We note that elementary proofs for graphs in hyperbolic space have  been obtained recently \cite{dLG1} \cite{dLG2} \cite{DGS}.

The asymptotic expression (\ref{massdef}) for the mass reflects the well-known fact that there exists no meaningful notion of energy density in General Relativity; for  more on this curious aspect of Einstein's theory see \cite{W1} \cite{W2}. In particular, the problem of defining the total mass of a space-like compact domain in a given space-time, and checking that this quasi-local mass has the expected physical properties, is a highly non-trivial matter. Nevertheless, by means of the so-called Hamilton-Jacobi method, Brown and York \cite{BY} were able to define a notion of quasi-local mass in case the bounding surface
can be isometrically embedded in $\mathbb R^3$. A celebrated  result by Shi-Tam \cite{ST1} then guarantees that, at least in the time-symmetric case, the Brown-York quasi-local mass is nonnegative for domains satisfying the dominant energy condition $R_g\geq 0$. Moreover, it vanishes only if the domain lies in $\mathbb R^3$. In complete analogy with this classical picture,
there have been attempts to define a notion of quasi-local mass for compact domains whose boundary can be isometrically embedded in hyperbolic space, so as to obtain (\ref{massvec}) asymptotically at infinity. An interesting proposal in the time-symmetric case has been put forward in works by Wang-Yau \cite{WY1}, Shi-Tam \cite{ST2} and Kwong-Tam \cite{KT}.
To this effect, let $(\Omega,g)$ be a compact $3$-manifold whose boundary $\Sigma$ is (topologically) a sphere whose Gauss curvature  satisfies $K>-1$ everywhere. A result by Pogorelov \cite{P} then implies that $\Sigma$ can be isometrically embedded in hyperbolic space $\mathbb H^3$ and the embedding is unique up to an isometry. Consider $\mathbb H^3\subset\mathbb R^{3,1}$ in the usual manner and let $X:\Sigma\to\mathbb R^{3,1}$ be the position vector of the embedding.
Thus, under these conditions, the {hyperbolic version of the Brown-York quasi-local mass} of $(\Sigma,X)$
is
\begin{equation}\label{massintloc}
\mathfrak m_{BY}(\Sigma,X)=\frac{1}{8\pi}\int_\Sigma \left(H_0-H\right)Xd\Sigma,
\end{equation}
where $H$ is the mean curvature of $\Sigma\subset\Omega$ and $H_0$ is the mean curvature of $\Sigma\subset\mathbb H^3$.
Notice that, as the notation makes it clear,
the right-hand side in (\ref{massintloc}) above depends not only on $\Sigma$ but also on the particular embedding $X$.
It is conjectured that, under the conditions above, if we further assume that $R_g\geq -6$ and $H>0$ then
\begin{equation}\label{conje}
\langle\langle\mathfrak m_{BY}(\Sigma,X),\eta \rangle\rangle\leq 0, \quad \eta\in\Cc^+,
\end{equation}
for any isometric embedding $X:\Sigma\to\mathbb H^3$.
Moreover, equality should hold
for some $\eta$ if and only if
$(\Omega,g)$ is a domain in $\mathbb H^3$; see \cite{ST2}.

\begin{remark}\label{back}
{\rm
To see that (\ref{conje}) is the natural counterpart, in the hyperbolic setting, of the celebrated result by Shi-Tam \cite{ST1} on the positivity of the classical Brown-York quasi-local mass mentioned above,
let us take $\mathbb H^3_{-\kappa^2}$, the hyperbolic space with curvature $-\kappa^2$, $\kappa>0$, as the reference space for the isometric embedding of $\Sigma$. Thus, the time component of (\ref{conje}) is nonnegative in case the conjecture holds with $R_g\geq -6\kappa^2$, which  gives
\[
\int_\Sigma(H_0-H)\cosh \kappa s\,d\Sigma \geq 0,
\]
where $s$ is geodesic distance to the origin.
If we send $\kappa\to 0$ then we get Euclidean space $\mathbb R^3$ in the limit and the inequality becomes
$
\mathfrak m_{BY}(\Sigma) \geq 0
$,
where 
\[
\mathfrak m_{BY}(\Sigma)=\frac{1}{8\pi}\int_{\Sigma}(H_0-H)d\Sigma
\] 
is the standard Brown-York mass of $\Sigma$ \cite{BY}.
This explains the connection between the conjecture and Shi-Tam's main result in \cite{ST1} mentioned above.
We also note the emphasis on the requirement that  (\ref{conje}) should hold for {\em any} isometric embedding $X:\Sigma\to\mathbb H^3$, which reflects the fact that, unlike $\mathfrak m_{BY}(\Sigma)$, $\mathfrak m_{BY}(\Sigma,X)$ depends on the particular embedding of $\Sigma$ in the reference space.
}
\end{remark}

In order to put our results below in their proper context, we now describe a few partial results
in the direction of confirming the conjecture.
We start with the following one, proved in \cite{ST2}, which improves
 \cite[Theorem 1.4]{WY1}.

\begin{theorem}\label{imprst}
\cite{ST2}
Let $(\Omega,g)$ be a compact $3$-manifold whose boundary has positive mean curvature $H>0$. Assume also that $R_g\geq -6$ and that the Gauss curvature of $\Sigma$ is larger than $-1$. Then there exists $\alpha>1$ depending only on the intrinsic geometry of $\Sigma$ such that the vector
\begin{equation}\label{alpha}
\mathfrak m(\Sigma,\alpha)=\int_\Sigma (H-H_0)
X_{\alpha} d\Sigma, \quad X_\alpha=(x,\alpha t)
\end{equation}
satisfies
\begin{equation}\label{alpha2}
\langle\langle \mathfrak m(\Sigma,\alpha),\eta\rangle\rangle\leq 0,
\end{equation}
for any $\eta\in\Cc^+$. Moreover, equality holds for some $\eta$ if and only if $(\Omega,g)$ is a domain in $\mathbb H^3$.
\end{theorem}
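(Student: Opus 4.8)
I would follow the ``fill-in plus positive mass'' strategy of \cite{ST2}, which adapts the Euclidean scheme of \cite{ST1} to the hyperbolic, vector-valued setting.

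\smallskip
\noindent\textbf{The exterior fill-in.} Since the Gauss curvature of $\Sigma$ satisfies $K>-1$, the Gauss equation in $\mathbb H^3$ forces the shape operator of any isometric immersion to be positive definite, so the Pogorelov embedding $X\colon\Sigma\to\mathbb H^3$ provided by \cite{P} realizes $\Sigma$ as the boundary of a convex body, and in particular $H_0>0$. Let $E\subset\mathbb H^3$ be the closed exterior region, foliated by the equidistant surfaces $\Sigma_r$, $r\ge 0$, with $\Sigma_0=X(\Sigma)$, so that $g_{\mathbb H^3}=dr^2+g_r$ with $g_r$ the metric induced on $\Sigma_r$; convexity is preserved along the foliation and the mean curvature $H_{0,r}$ of $\Sigma_r$ stays positive and grows. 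On $E$ I look for a metric $\bar g=u^2\,dr^2+g_r$ with $u=u(r,\cdot)>0$ such that $R_{\bar g}\equiv -6$. In these Gaussian-type coordinates the constraint $R_{\bar g}=-6$ becomes a quasilinear parabolic equation for $u$ along the foliation, which I would solve with the Dirichlet datum
\[
u|_{\Sigma_0}=\frac{H_0}{H},
\]
chosen precisely so that the mean curvature of $\Sigma_0$ computed in $(E,\bar g)$, namely $H_0/u|_{\Sigma_0}$, equals the mean curvature $H$ of $\Sigma\subset\Omega$.

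\smallskip
\noindent\textbf{Gluing and Theorem~\ref{pmt}.} I then glue $(\Omega,g)$ to $(E,\bar g)$ along $\Sigma\cong\Sigma_0$. The induced metrics agree and, by the choice of datum, so do the mean curvatures with respect to the common outward normal; hence the metric $\hat g$ on $\hat M=\Omega\cup_\Sigma E$ is Lipschitz across the corner, has distributional scalar curvature $\ge -6$ (the corner term is proportional to $\bigl(H-H_0/u|_{\Sigma_0}\bigr)\delta_\Sigma=0$), and is asymptotically hyperbolic in the sense of Definition~\ref{defasym2}---for this one needs sharp asymptotics of $u$ as $r\to\infty$, i.e. that $\bar g$ admits an expansion of the form \eqref{asymexpcoll}--\eqref{remainder}. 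After a standard mollification of the corner producing smooth metrics with $R\ge -6-\varepsilon$ and the same asymptotics (or using that Wang's spinorial argument tolerates the corner directly), Theorem~\ref{pmt} gives $\langle\langle\Upsilon_{(\hat M,\hat g)},\eta\rangle\rangle\le 0$ for all $\eta\in\mathcal{C}^+$, with equality for some $\eta$ only if $(\hat M,\hat g)$ is $\mathbb H^3$.

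\smallskip
\noindent\textbf{Monotonicity and the dilation $\alpha$.} Along the foliation I would track
\[
\Phi(r)=\frac{1}{8\pi}\int_{\Sigma_r}\Bigl(H_{0,r}-\frac{H_{0,r}}{u}\Bigr)X_r\,d\Sigma_r ,
\]
$X_r$ being the restriction to $\Sigma_r$ of the position vector of $\mathbb H^3\subset\mathbb R^{3,1}$. At $r=0$ the choice of datum gives $\Phi(0)=\frac{1}{8\pi}\int_\Sigma(H_0-H)X\,d\Sigma$ as in \eqref{massintloc}, and the asymptotics of $\bar g$ give $\lim_{r\to\infty}\Phi(r)=\Upsilon_{(\hat M,\hat g)}$, exactly as a Brown--York/Hawking surface integral recovers the ADM vector. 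Pairing with the static potentials $V_a=-\langle\langle X,a\rangle\rangle$ of $\mathbb H^3$ (which solve $\nabla^2 V_a=V_a\,g_{\mathbb H^3}$), the equation for $u$ together with $R_{\bar g}=-6$ should yield, after integration by parts on $\Sigma_r$, a monotonicity for $r\mapsto\langle\langle\Phi(r),a\rangle\rangle$ when $a$ is future \emph{time-like}; with the previous step this already pins down the time component of $\Phi(0)$ and, in the limit $\kappa\to 0$ of Remark~\ref{back}, recovers the Euclidean result of \cite{ST1}. For $a$ on the null cone the error terms lose their sign, so one instead estimates the spatial part directly, obtaining a bound
\[
\Bigl|\int_{\Sigma_r}\Bigl(H_{0,r}-\tfrac{H_{0,r}}{u}\Bigr)x_r\,d\Sigma_r\Bigr|\le C(\Sigma)\int_{\Sigma_r}\Bigl(H_{0,r}-\tfrac{H_{0,r}}{u}\Bigr)t_r\,d\Sigma_r
\]
with $C(\Sigma)>1$ controlled purely by intrinsic data of $\Sigma$ (diameter and curvature bounds, which in turn control the geometry of the $\Sigma_r$ and the ratios $x_r/t_r$). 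Choosing any $\alpha>C(\Sigma)$ then makes $\mathfrak m(\Sigma,\alpha)=\int_\Sigma(H-H_0)(x,\alpha t)\,d\Sigma$---a time-dilated multiple of $\Phi(0)$---land in the closed causal future, which is \eqref{alpha2}.

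\smallskip
\noindent\textbf{Main obstacle and rigidity.} The crux is the analysis of the parabolic equation on the noncompact manifold $E$: long-time existence and positivity of $u$, uniform control near $r=0$ where only $H_0/H>0$ is known, and, above all, asymptotics as $r\to\infty$ sharp enough that $(\hat M,\hat g)$ meets Definition~\ref{defasym2} and $\Phi(r)$ converges to the Wang mass. The second difficulty is the vector-valued monotonicity computation and, within it, isolating the dependence of $\alpha$ on the intrinsic geometry of $\Sigma$ alone. For rigidity: if $\langle\langle\mathfrak m(\Sigma,\alpha),\eta\rangle\rangle=0$ for some $\eta\in\mathcal{C}^+$, then tracing the equalities back forces $u\equiv 1$ and $R_{\hat g}\equiv -6$, so $(\hat M,\hat g)=\mathbb H^3$ by the rigidity in Theorem~\ref{pmt} and $(\Omega,g)$ is a domain in $\mathbb H^3$. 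One could also bypass the gluing and argue directly on $\Omega$ by a Witten-type integration by parts, prescribing the Dirac boundary condition from the parabolic extension---the viewpoint of Hijazi--Montiel--Raulot, also used elsewhere in this paper.
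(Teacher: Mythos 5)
The paper does not prove Theorem~\ref{imprst}; it is quoted verbatim from \cite{ST2} (which sharpens \cite[Theorem~1.4]{WY1}) and used as background, so there is no internal argument to compare against. Your outline is the correct strategy at the structural level, namely the Wang--Yau/Shi--Tam hyperbolic quasi-spherical fill-in: convexity of the Pogorelov embedding from $K>-1$ via Gauss's equation, the ansatz $\bar g=u^2\,dr^2+g_r$ on the exterior with $R_{\bar g}\equiv -6$ and boundary datum $u|_{\Sigma_0}=H_0/H$ matching mean curvatures, Lipschitz gluing with distributional $R\ge -6$, and an appeal to Theorem~\ref{pmt} on the resulting asymptotically hyperbolic manifold.

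The genuine gap is in the monotonicity step, and it is not merely a deferred technicality. You assert monotonicity of $r\mapsto\langle\langle\Phi(r),a\rangle\rangle$ for every future timelike $a$, together with $\lim_{r\to\infty}\Phi(r)=\Upsilon_{(\hat M,\hat g)}$. If both held, then $\langle\langle\Phi(0),a\rangle\rangle\le\langle\langle\Upsilon_{(\hat M,\hat g)},a\rangle\rangle\le 0$ for all timelike $a$ and, by continuity, for all $\eta\in\Cc^+$ --- i.e.\ \eqref{alpha2} with $\alpha=1$, which is exactly the open conjecture \eqref{conje}; so the claim as stated cannot be right. In \cite{WY1} and \cite{ST2} the monotonicity is only established for the time coordinate $V=\cosh r$, and the spatial components of $\Phi(r)$ neither are monotone nor need converge to the corresponding components of $\Upsilon_{(\hat M,\hat g)}$. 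The whole point of the dilation $\alpha$ is to absorb the resulting defect: Shi--Tam bound the spatial contribution by the time contribution using the sandwich of the embedded surface between geodesic spheres of radii $R_1<R_2$, yielding the explicit
\[
\alpha=\coth R_1+\frac{1}{\sinh R_1}\sqrt{\frac{\sinh^2R_2}{\sinh^2R_1}-1}
\]
of Remark~\ref{stalpha1}. Your sketch gestures at ``some $C(\Sigma)>1$'' controlled by intrinsic geometry, but does not identify this mechanism, and without it the inequality \eqref{alpha2} does not follow from the fill-in and Theorem~\ref{pmt} alone. The rigidity step (equality forces $u\equiv 1$ and then rigidity in Theorem~\ref{pmt}) is fine, and you correctly flag the parabolic analysis on the noncompact exterior as the other principal burden.
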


\begin{remark}\label{stalpha1}
{\rm It is shown in \cite{ST2} that
\[
\alpha=\coth R_1+\frac{1}{\sinh R_1}\sqrt{\frac{\sinh^2R_2}{\sinh^2R_1}-1},
\]
where the image of $\Sigma$ under the embedding is supposed to lie between geodesic spheres of radius $R_1<R_2$ centered at the origin. Thus, $\alpha\to 1$ if $R_1,R_2\to +\infty$ in such a way that $2R_1-R_2\to +\infty$.
We also note that Kwong \cite{K} has generalized Theorem \ref{imprst} to any dimension $n\geq 3$.
}
\end{remark}

We now turn to a result by Kwong and Tam \cite{KT}. For this we need to introduce some further 
notation. For $\epsilon>0$ small enough we set $\Sigma_{\epsilon}=\rho^{-1}(\epsilon)$, where $\rho$ is the fixed defining function. For further reference, we then say that $(\Omega_\epsilon,\Sigma_\epsilon)$ is a {\em coordinate exhaustion} of $(M,g)$ if $\Omega_\epsilon$ is the compact domain such that $\partial\Omega_\epsilon=\Sigma_\epsilon$. It is proved in \cite{KT}
that the Gauss curvature of $\Sigma_\epsilon$ with the induced metric satisfies
\[
K=\sinh^2\epsilon +O(\epsilon^5).
\]
Thus, by Pogorelov's result mentioned above, there exists  an isometric embedding $\Sigma_\epsilon\subset \mathbb H^3$ for all $\epsilon>0$ small enough. The main result in \cite{KT} says that by suitably composing this embedding with an isometry, the resulting Brown-York quasi-local mass vector converges to Wang's mass $\Upsilon_{(M,g)}$ as $\epsilon\to 0$.

\begin{theorem}\label{ktmain}\cite{KT}
Let $(M,g)$ be an asymptotically hyperbolic manifold. Then for all $\epsilon>0$ sufficiently small there exists an isometric embedding $X^{(\epsilon)}:\Sigma_{\epsilon}\to \mathbb H^3$ such that
\begin{equation}\label{ktmaineq}
\lim_{\epsilon\to 0}\mathfrak m_{BY}(\Sigma_\epsilon,X^{(\epsilon)})=\Upsilon_{(M,g)}.
\end{equation}
\end{theorem}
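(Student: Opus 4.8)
The plan is to expand, as $\epsilon\to 0$, each geometric quantity appearing on the right-hand side of \eqref{massintloc} for the surfaces $\Sigma_\epsilon$. First I would work in the collar coordinates of \eqref{expan}, identifying $\Sigma_\epsilon=\rho^{-1}(\epsilon)$ with $\mathbb S^2$ through the defining function: its induced metric is then $\sigma_\epsilon=\sinh^{-2}\epsilon\,(h_0+\tfrac{\epsilon^3}{3}h+e)$, its area element is $d\Sigma=\sinh^{-2}\epsilon\,(1+O(\epsilon^3))\,d\mu_{h_0}$, and its Gauss curvature is $K_{\sigma_\epsilon}=\sinh^2\epsilon+O(\epsilon^5)$ (the last being the expansion already recorded in \cite{KT}). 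Since the unit normal of $\Sigma_\epsilon$ pointing out of $\Omega_\epsilon$ is $-\sinh\rho\,\partial_\rho$, a direct computation from \eqref{expan} gives, for its mean curvature, $H=2\cosh\rho-\tfrac12\sinh\rho\,{\rm tr}_{h_\rho}(\partial_\rho h_\rho)$; substituting \eqref{asymexpcoll}--\eqref{remainder}, and using \eqref{remainder} to bound $\partial_\rho e=O(\rho^3)$, this yields
\[
H(\Sigma_\epsilon)=2\cosh\epsilon-\tfrac12\,\epsilon^3\,{\rm tr}_{h_0}h+O(\epsilon^4).
\]

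Next I would produce the reference data $(X^{(\epsilon)},H_0)$. By Pogorelov's theorem \cite{P}, $\Sigma_\epsilon$ admits an isometric embedding into $\mathbb H^3$, unique up to an isometry of $\mathbb H^3$. After the rescaling $\sinh^2\epsilon\,\sigma_\epsilon=h_0+O(\epsilon^3)$ the metric is $O(\epsilon^3)$-close in $C^{2,\alpha}$ to the round metric, and I would invoke the quantitative stability of isometric embeddings of such nearly round metrics --- in the $\epsilon$-uniform form needed here, where the model sphere degenerates, this is the content of \cite{ST2}, \cite{WY1} and \cite{KT}. This lets one fix the normalization by composing the embedding with a suitable isometry of $\mathbb H^3$ so that its image is $C^2$-asymptotic to the geodesic sphere $S_{r_\epsilon}$ of radius $r_\epsilon$ centered at the origin, where $\sinh r_\epsilon=1/\sinh\epsilon$, with its $\mathbb S^2$-coordinate aligned to the one coming from $\rho$; for the resulting $X^{(\epsilon)}:\Sigma_\epsilon\to\mathbb R^{3,1}$ one then has
\[
X^{(\epsilon)}=\Big(\tfrac{1}{\sinh\epsilon}\,\omega,\ \tfrac{\cosh\epsilon}{\sinh\epsilon}\Big)+O(\epsilon^2),\qquad \omega\in\mathbb S^2\subset\mathbb R^3,
\]
while the trace-free part $A_0^\circ$ of its second fundamental form satisfies $|A_0^\circ|^2=O(\epsilon^6)$. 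Inserting the latter into the Gauss equation, written as $H_0=2\sqrt{(K_{\sigma_\epsilon}+1)+\tfrac12|A_0^\circ|^2}$, together with $K_{\sigma_\epsilon}+1=\cosh^2\epsilon+O(\epsilon^5)$, gives $H_0(\Sigma_\epsilon)=2\cosh\epsilon+O(\epsilon^5)$ --- the point being that $K_{\sigma_\epsilon}$ is intrinsic and that $A_0^\circ$ enters only through its square.

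The remaining step is a limit computation. The leading terms cancel, leaving $H_0-H=\tfrac12\,\epsilon^3\,{\rm tr}_{h_0}h+O(\epsilon^4)$; inserting this and the expansions of $X^{(\epsilon)}$ and $d\Sigma$ into \eqref{massintloc}, the leading part of each component of the integrand equals $\tfrac12\,\epsilon^3(\sinh\epsilon)^{-3}\,{\rm tr}_{h_0}h$ times $\omega$ (spatial part) respectively $\cosh\epsilon$ (temporal part), while every remaining term carries at least one more power of $\epsilon$ and so disappears in the limit. Since $\epsilon^3(\sinh\epsilon)^{-3}\to 1$ and $\cosh\epsilon\to1$, one is left with
\[
\lim_{\epsilon\to 0}\mathfrak m_{BY}(\Sigma_\epsilon,X^{(\epsilon)})=\frac{1}{16\pi}\Big(\int_{\mathbb S^2}\omega\,{\rm tr}_{h_0}h\,d\mu_{h_0},\ \int_{\mathbb S^2}{\rm tr}_{h_0}h\,d\mu_{h_0}\Big)=\Upsilon_{(M,g)};
\]
compare \eqref{massvec}.

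I expect the main obstacle to be the second step: what is genuinely needed there is not Pogorelov's existence statement but \emph{quantitative}, $\epsilon$-uniform control --- of the embedding $X^{(\epsilon)}$, of its second fundamental form, and of the normalizing isometry of $\mathbb H^3$ --- in the regime in which the model geodesic sphere $S_{r_\epsilon}$ blows up as $\epsilon\to 0$, and establishing such control is the analytic heart of \cite{KT} (building on the isometric embedding estimates of \cite{ST2} and \cite{WY1}). This is also where the full strength of the decay \eqref{remainder}, that is, the $C^3$-control of $e$, gets used --- both for those estimates and for the expansion of $H$. Finally, the freedom in the normalizing isometry is precisely what ``suitably composing'' refers to in the statement: fixing it as above is what guarantees that the limit is $\Upsilon_{(M,g)}$ itself, rather than one of its images under the action of ${\rm Isom}(\mathbb H^3)$ on $\mathbb R^{3,1}$.
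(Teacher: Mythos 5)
The theorem is not proved in this paper: it is quoted directly from Kwong--Tam \cite{KT}, and the only ingredients of its proof that the paper records are the mean curvature expansions (\ref{asymp1}) and (\ref{asymp2}) used later in Propositions \ref{funclim0} and \ref{funclim}, together with the Gauss curvature expansion $K=\sinh^2\epsilon+O(\epsilon^5)$ mentioned in the Introduction. There is therefore no proof in the paper to compare against.

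On its own terms, your sketch is a faithful outline of the Kwong--Tam argument, and the parts that overlap with what this paper does record check out: $H=2\cosh\rho-\tfrac12\sinh\rho\,{\rm tr}_{h_\rho}(\partial_\rho h_\rho)$ for the outward normal of a level set of (\ref{expan}), the resulting $H(\Sigma_\epsilon)=2\cosh\epsilon-\tfrac{\epsilon^3}{2}{\rm tr}_{h_0}h+O(\epsilon^4)$ (this is (\ref{asymp1})), the Gauss-equation identity $H_0=2\sqrt{(K+1)+\tfrac12|A_0^\circ|^2}$ leading to (\ref{asymp2}), and the final limit computation with $X^{(\epsilon)}=(\omega/\sinh\epsilon,\ \cosh\epsilon/\sinh\epsilon)+O(\epsilon^2)$ all reproduce $\Upsilon_{(M,g)}$ as written. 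You have also located the real analytic content exactly where it lies: Pogorelov gives only existence and uniqueness of the embedding up to isometry, whereas (\ref{ktmaineq}) needs $\epsilon$-uniform $C^2$ control of $X^{(\epsilon)}$, the decay of $A_0^\circ$, and a canonical choice of normalizing isometry in a regime where the reference geodesic sphere blows up. That step is the substance of \cite{KT}, resting on the isometric embedding estimates of \cite{ST2} and \cite{WY1}, and your sketch treats it, as you acknowledge, as a black box. As a reconstruction of the reduction to those estimates the proposal is accurate; as a self-contained proof it is incomplete precisely where you flag it.
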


As already noticed in \cite{KT}, if we combine this with Theorem \ref{pmt} it follows that  $\mathfrak m_{BY}(\Sigma_\epsilon,X^{(\epsilon)})$ is time-like and future directed for all $\epsilon>0$ small enough, whenever $(M,g)$ is not isometric to hyperbolic space. 
The main purpose of this note is to show that this also holds true for {\em any} choice of isometric embeddings $X_\epsilon:\Sigma_\epsilon\to \mathbb H^3$; see Theorem \ref{mmain2} below. In particular, this confirms that in a sense the hyperbolic Brown-York mass of a sufficiently large domain has the conjectured causal character.

\begin{theorem}\label{mmain1}
If $(\Omega_\epsilon,\Sigma_\epsilon)$ is any coordinate exhaustion of an asymptotically hyperbolic  $3$-manifold $(M,g)$ satisfying $R_g\geq -6$ then 
\begin{equation}\label{limit}
\langle\langle\lim_{\epsilon\to 0}{\mathfrak m}_{BY}(\Sigma_\epsilon,X_\epsilon),\eta\rangle\rangle\leq 0, \quad \eta\in\Cc^+,
\end{equation}
for any choice of isometric embeddings $X_\epsilon:\Sigma_\epsilon\to\mathbb H^3$. Moreover, the equality holds for some $\eta\in\Cc^+$ only if $(M,g)$ is isometric to hyperbolic space.
\end{theorem}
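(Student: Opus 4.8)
The plan is to deduce the statement from Theorem \ref{ktmain} and Theorem \ref{pmt} by exploiting the way $\mathfrak m_{BY}$ transforms under a change of isometric embedding. The key preliminary fact is that, for $\epsilon>0$ small, any two isometric embeddings of $\Sigma_\epsilon$ (with its induced metric) into $\mathbb H^3$ differ by an ambient isometry; this is the uniqueness half of Pogorelov's theorem \cite{P}, which applies because $K_{\Sigma_\epsilon}=\sinh^2\epsilon+O(\epsilon^5)>-1$ for $\epsilon$ small. Realizing $\mathbb H^3$ as the future hyperboloid $\{X\in\mathbb R^{3,1}:\langle\langle X,X\rangle\rangle=-1,\ t>0\}$, such an ambient isometry is the restriction of a linear map $A\in O^{+}(3,1)$, the orthochronous Lorentz group, which preserves the pairing $\langle\langle\cdot,\cdot\rangle\rangle$, the light cone $\Cc^+$, and the open cone of future time-like vectors.

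The point is that $\mathfrak m_{BY}$ is equivariant with respect to this action. Indeed, if $X$ and $X'=A\circ X$ are isometric embeddings of $\Sigma_\epsilon$ into $\mathbb H^3$, then the mean curvature $H_0$ of the image, the mean curvature $H$ of $\Sigma_\epsilon\subset\Omega_\epsilon$, and the area element on $\Sigma_\epsilon$ are unchanged, so by linearity of $A$,
\begin{equation*}
\mathfrak m_{BY}(\Sigma_\epsilon,X')=\frac{1}{8\pi}\int_{\Sigma_\epsilon}(H_0-H)\,(A\circ X)\,d\Sigma=A\cdot\mathfrak m_{BY}(\Sigma_\epsilon,X).
\end{equation*}
Hence, if $X^{(\epsilon)}$ denotes the Kwong--Tam embeddings from Theorem \ref{ktmain} and $X_\epsilon$ is an arbitrary isometric embedding, we may write $X_\epsilon=A_\epsilon\circ X^{(\epsilon)}$ with $A_\epsilon\in O^{+}(3,1)$ and conclude $\mathfrak m_{BY}(\Sigma_\epsilon,X_\epsilon)=A_\epsilon\cdot\mathfrak m_{BY}(\Sigma_\epsilon,X^{(\epsilon)})$. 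In particular $\langle\langle\mathfrak m_{BY}(\Sigma_\epsilon,X_\epsilon),\mathfrak m_{BY}(\Sigma_\epsilon,X_\epsilon)\rangle\rangle=\langle\langle\mathfrak m_{BY}(\Sigma_\epsilon,X^{(\epsilon)}),\mathfrak m_{BY}(\Sigma_\epsilon,X^{(\epsilon)})\rangle\rangle$, and $\mathfrak m_{BY}(\Sigma_\epsilon,X_\epsilon)$ is future time-like exactly when $\mathfrak m_{BY}(\Sigma_\epsilon,X^{(\epsilon)})$ is.

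Suppose first that $(M,g)$ is not isometric to $\mathbb H^3$. By Theorem \ref{ktmain}, $\mathfrak m_{BY}(\Sigma_\epsilon,X^{(\epsilon)})\to\Upsilon_{(M,g)}$, which by Theorem \ref{pmt} is future time-like; since that cone is open, $\mathfrak m_{BY}(\Sigma_\epsilon,X^{(\epsilon)})$, hence $\mathfrak m_{BY}(\Sigma_\epsilon,X_\epsilon)$, is future time-like for all $\epsilon>0$ small. As the Lorentzian inner product of a future time-like vector with a nonzero future causal vector is strictly negative, $\langle\langle\mathfrak m_{BY}(\Sigma_\epsilon,X_\epsilon),\eta\rangle\rangle<0$ for every $\eta\in\Cc^+$ and all small $\epsilon$. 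If the limit $v=\lim_{\epsilon\to0}\mathfrak m_{BY}(\Sigma_\epsilon,X_\epsilon)$ exists, then $\langle\langle v,v\rangle\rangle=\langle\langle\Upsilon_{(M,g)},\Upsilon_{(M,g)}\rangle\rangle=-\mathfrak m_{(M,g)}^{2}<0$ by the invariance noted above, so $v$ is time-like, and, being a limit of future time-like vectors, it is future time-like; hence $\langle\langle v,\eta\rangle\rangle<0$ for all $\eta\in\Cc^+$, which gives (\ref{limit}) strictly and rules out equality. If instead $(M,g)$ is isometric to $\mathbb H^3$, then each $\Sigma_\epsilon=\rho^{-1}(\epsilon)$ is a geodesic sphere, hence totally umbilic, and its isometric embedding into $\mathbb H^3$ is a geodesic sphere of the same radius, so $H=H_0$ and $\mathfrak m_{BY}(\Sigma_\epsilon,X_\epsilon)=0$ for every $\epsilon$ and every choice of embedding; the limit is $0$ and (\ref{limit}) holds with equality.

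The heart of the argument is the equivariance of $\mathfrak m_{BY}$ combined with Pogorelov's uniqueness, which is what disposes of the ``for any choice of embeddings'' clause; the only genuinely delicate point is that the family $A_\epsilon$ need not stay bounded, and this is handled purely through the Lorentz-invariance of $\langle\langle\cdot,\cdot\rangle\rangle$. A more self-contained, and in the spirit of this paper spinorial, route would avoid invoking Theorems \ref{ktmain} and \ref{pmt} and instead prove (\ref{limit}) directly: for fixed $\eta\in\Cc^+$ one writes $\langle\langle\mathfrak m_{BY}(\Sigma_\epsilon,X_\epsilon),\eta\rangle\rangle$ as a boundary integral over $\Sigma_\epsilon$, integrates by parts against a suitable imaginary Killing spinor on $\Omega_\epsilon$ to obtain a bulk integral of the schematic form $-\int_{\Omega_\epsilon}\big(|\nabla\psi|^{2}+\tfrac14(R_g+6)|\psi|^{2}\big)$, and lets $\epsilon\to0$; the resulting expression is manifestly nonpositive under $R_g\ge-6$, with equality forcing a nontrivial imaginary Killing spinor and hence hyperbolicity.
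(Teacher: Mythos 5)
Your argument is a genuinely different and shorter route than the paper's. The paper proves Theorem~\ref{mmain1} by first establishing the Witten-type bulk formula of Theorem~\ref{mmain3}: it constructs a Killing--harmonic spinor $\psi^{(\eta)}$ asymptotic to the imaginary Killing spinor $\phi^{(z)}$ with $\overline{\eta}(z)=\eta$, applies the Lichnerowicz/Reilly formula~(\ref{parts3}) on the exhaustion $\Omega_\epsilon$, and then unwinds the boundary term using the gauge transformation $\mathcal B$ and the identity~(\ref{gives}) to arrive at the bulk integral, from which~(\ref{limit}) and the rigidity both drop out. You instead observe that $\mathfrak m_{BY}$ is $O^{+}(3,1)$-equivariant under a change of isometric embedding, and appeal to Pogorelov uniqueness to reduce any $X_\epsilon$ to the Kwong--Tam normalized $X^{(\epsilon)}$, after which Theorems~\ref{ktmain} and~\ref{pmt} close the argument. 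The equivariance observation, namely $\mathfrak m_{BY}(\Sigma_\epsilon,A\circ X)=A\cdot\mathfrak m_{BY}(\Sigma_\epsilon,X)$ for $A\in O^{+}(3,1)$, is correct, and it is indeed what makes the ``for \emph{any} choice of isometric embeddings'' clause tractable; it also immediately gives Theorem~\ref{mmain2} (future time-likeness for each small $\epsilon$), which is actually closer in spirit to what you prove unconditionally. What your route does \emph{not} buy is the mass formula~(\ref{newproof2}) itself, which is the paper's main technical output and the ingredient feeding Remark~\ref{lamb} and the quasi-local Theorems~\ref{main1}--\ref{main3}; you are outsourcing the analytic core to \cite{KT} and \cite{Wa}.

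There is one point you should not wave away: you condition on the existence of $v=\lim_{\epsilon\to0}\mathfrak m_{BY}(\Sigma_\epsilon,X_\epsilon)$, and Lorentz-invariance of $\langle\langle\cdot,\cdot\rangle\rangle$ controls $\langle\langle v,v\rangle\rangle$ \emph{given} that the limit exists, but it does nothing to show the limit exists. Since $\mathfrak m_{BY}(\Sigma_\epsilon,X_\epsilon)=A_\epsilon\cdot\mathfrak m_{BY}(\Sigma_\epsilon,X^{(\epsilon)})$ and the family $A_\epsilon\in O^{+}(3,1)$ is not controlled, the left-hand side can diverge even though $\mathfrak m_{BY}(\Sigma_\epsilon,X^{(\epsilon)})$ converges (boost a convergent time-like sequence by $A_\epsilon\to\infty$). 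So your proof establishes~(\ref{limit}) and the rigidity only on the subclass of sequences $X_\epsilon$ for which the limit exists, i.e.\ for which $A_\epsilon$ is eventually bounded. This is arguably the only sensible reading of the theorem statement, and the paper's own route (which feeds the Kwong--Tam asymptotics~(\ref{asymp1})--(\ref{asymp2}) into the boundary computation) faces the same normalization issue; but you should state this hypothesis explicitly rather than fold it into ``if the limit exists.'' Your treatment of the hyperbolic case is fine once one notes that, when $(M,g)\cong\mathbb H^3$, Definition~\ref{defasym2} forces the defining function $\rho$ to be the standard inverted radial coordinate (since $h_0$ is required to be round, fixing both the scale and the center), so the $\Sigma_\epsilon$ really are geodesic spheres and $H\equiv H_0$.
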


This immediately yields the following result; compare with \cite[Corollary 1.1]{KT}.

\begin{theorem}\label{mmain2}
Let $(\Omega_\epsilon,\Sigma_\epsilon)$ be a coordinate exhaustion of an asymptotically hyperbolic  $3$-manifold $(M,g)$ satisfying $R_g\geq -6$. Assume that $(M,g)$ is not isometric to $(\mathbb H^3,g_0)$. Then, for any given choice of isometric embeddings $X_\epsilon:\Sigma_\epsilon\to\mathbb H^3$,
the vector
$
{\mathfrak m}_{BY}(\Sigma_\epsilon, X_\epsilon)
$
is time-like and future directed
for all $\epsilon>0$ small enough.
\end{theorem}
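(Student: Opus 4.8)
The plan is to deduce Theorem \ref{mmain2} from Theorem \ref{mmain1} by the same reasoning used in \cite{KT} to pass from their Theorem \ref{ktmain} to their Corollary 1.1, so the real content is already packaged in Theorem \ref{mmain1}. First I would observe that, by Theorem \ref{mmain1}, the limit vector $\Upsilon:=\lim_{\epsilon\to 0}\mathfrak m_{BY}(\Sigma_\epsilon,X_\epsilon)$ satisfies $\langle\langle\Upsilon,\eta\rangle\rangle\le 0$ for every $\eta\in\Cc^+$. A standard fact about the Lorentzian inner product (\ref{inner}) is that a vector $v\in\mathbb R^{3,1}$ satisfies $\langle\langle v,\eta\rangle\rangle\le 0$ for all future-directed null $\eta$ if and only if $v$ lies in the closed future causal cone, i.e. $v$ is causal (time-like or null) and future directed, or $v=0$; I would record this elementary linear-algebra remark and apply it to conclude that $\Upsilon$ is causal and future directed. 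The equality clause of Theorem \ref{mmain1} then says that if $\Upsilon$ is \emph{not} strictly time-like and future directed — that is, if it is null, possibly zero — then $(M,g)$ must be isometric to $(\mathbb H^3,g_0)$, which is excluded by hypothesis. Hence under the stated assumptions $\Upsilon$ is time-like and future directed; in particular $\langle\langle\Upsilon,\Upsilon\rangle\rangle<0$ and the time component of $\Upsilon$ is strictly positive.

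The second step is to upgrade this statement about the limit vector to a statement about $\mathfrak m_{BY}(\Sigma_\epsilon,X_\epsilon)$ for $\epsilon$ small. Here I would use that the set of time-like future-directed vectors is open in $\mathbb R^{3,1}$: since $\Upsilon$ lies in this open cone and $\mathfrak m_{BY}(\Sigma_\epsilon,X_\epsilon)\to\Upsilon$ as $\epsilon\to 0$, there is $\epsilon_0>0$ such that $\mathfrak m_{BY}(\Sigma_\epsilon,X_\epsilon)$ belongs to the same open cone for all $0<\epsilon<\epsilon_0$, i.e. it is time-like and future directed. This is exactly the assertion of Theorem \ref{mmain2}.

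I do not anticipate a genuine obstacle in this deduction: the only mild point requiring care is that the conclusion refers to an \emph{arbitrary} choice of isometric embeddings $X_\epsilon$, so one must make sure the limit $\lim_{\epsilon\to 0}\mathfrak m_{BY}(\Sigma_\epsilon,X_\epsilon)$ in (\ref{limit}) actually exists for any such choice — but this is precisely what is being asserted (and proved) in Theorem \ref{mmain1}, whose left-hand side presupposes the existence of that limit. Thus the proof of Theorem \ref{mmain2} is a two-line corollary: invoke Theorem \ref{mmain1} to place the limit in the open future time-like cone (the equality case removing the degenerate possibility since $(M,g)\not\cong\mathbb H^3$), then invoke openness of that cone together with convergence. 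The entire difficulty of the result is concentrated in Theorem \ref{mmain1}, whose proof in turn rests on the Witten-type bulk-integral formula and the energy condition $R_g\ge -6$ advertised in the abstract.
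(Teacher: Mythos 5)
Your deduction is correct and matches the paper's intent: the paper dispenses with a proof, stating only that Theorem \ref{mmain2} ``immediately yields'' from Theorem \ref{mmain1} (mirroring how Corollary 1.1 of \cite{KT} follows from Theorems \ref{ktmain} and \ref{pmt}), and your two-step argument --- (i) the inequality over all $\eta\in\Cc^+$ places the limit in the closed future causal cone, the strict-inequality/rigidity dichotomy then forces it off the null boundary since $(M,g)\not\cong\mathbb H^3$; (ii) openness of the future time-like cone together with convergence transfers this to $\mathfrak m_{BY}(\Sigma_\epsilon,X_\epsilon)$ for small $\epsilon$ --- is exactly the content behind that remark. You are also right to flag that the existence of $\lim_{\epsilon\to 0}\mathfrak m_{BY}(\Sigma_\epsilon,X_\epsilon)$ for an arbitrary choice of embeddings is being presupposed by the formulation of Theorem \ref{mmain1}, so no further argument is needed at this stage.
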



The results above actually follow from a Witten-type formula for the limit in (\ref{limit}). As explained in Section \ref{proofmain1}, to each $\eta\in\Cc^+$ we can attach an imaginary Killing spinor $\phi^{(\eta)}$  on $\mathbb H^3$. By carefully identifying the two infinities and adapting Witten's method in the standard way \cite{AD} \cite{Wa} \cite{CH} \cite{WY1} we will be able to find a Killing-harmonic spinor $\psi^{(\eta)}$ on $M$ which asymptotes $\phi^{(\eta)}$ at infinity in a suitable manner.     
The above mentioned Witten-type formula is expressed in terms of $\psi^{(\eta)}$ as follows.

\begin{theorem}
\label{mmain3}
If $(M,g)$ is an asymptotically hyperbolic manifold  satisfying $R_g\geq -6$ then  
\begin{equation}\label{newproof2}
\int_M\left(|\widetilde \nabla^+\psi^{(\eta)}|^2+\frac{R_g+6}{4}|\psi^{(\eta)}|^2\right)dM
=-4\pi\langle\langle \lim_{\epsilon\to 0}\mathfrak m_{BY}(\Sigma_\epsilon,X_\epsilon),\eta\rangle\rangle,
\end{equation}
for any $\eta\in\Cc^+$ and any choice of isometric embedding $X_\epsilon:\Sigma_\epsilon\to \mathbb H^3$.
\end{theorem}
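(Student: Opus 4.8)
The plan is to establish \eqref{newproof2} by running Witten's argument on the exhausting domains $\Omega_\epsilon$, applying the Bochner--Lichnerowicz--Weitzenb\"ock identity for the modified connection $\widetilde\nabla^+$ (the one whose parallel sections are imaginary Killing spinors on $\mathbb H^3$), and then carefully computing the resulting boundary term on $\Sigma_\epsilon$ in terms of the mean curvature $H$ of $\Sigma_\epsilon\subset\Omega_\epsilon$ and the ``reference'' mean curvature $H_0$ coming from the isometric embedding $X_\epsilon:\Sigma_\epsilon\to\mathbb H^3$. First I would set up the spinorial framework: fix $\eta\in\Cc^+$, construct the imaginary Killing spinor $\phi^{(\eta)}$ on $\mathbb H^3$ whose associated Dirac current realizes $\eta$ (this is the standard correspondence between imaginary Killing spinors on hyperbolic space and future-directed null vectors), and then solve the boundary value problem for the Dirac-type operator associated with $\widetilde\nabla^+$ on each $\Omega_\epsilon$, or solve it once on all of $M$ with the prescribed asymptotics, producing a Killing-harmonic spinor $\psi^{(\eta)}$ with $\psi^{(\eta)}-\phi^{(\eta)}\to 0$ at infinity at a suitable rate; the decay assumptions \eqref{asymexpcoll}--\eqref{remainder} on $(M,g)$ together with the matching of the two infinities along the coordinate exhaustion are exactly what is needed to make this solvable, as in \cite{AD}, \cite{Wa}, \cite{CH}.

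Next I would apply the Weitzenb\"ock formula: integrating $|\widetilde\nabla^+\psi^{(\eta)}|^2$ over $\Omega_\epsilon$ and using $\slashed D^+\psi^{(\eta)}=0$ (the Killing-harmonic condition) yields, after integration by parts,
\[
\int_{\Omega_\epsilon}\left(|\widetilde\nabla^+\psi^{(\eta)}|^2+\frac{R_g+6}{4}|\psi^{(\eta)}|^2\right)dM
=-\int_{\Sigma_\epsilon}\langle\psi^{(\eta)},(\widetilde\nabla^+_\nu+\nu\cdot\slashed D^+_{\Sigma_\epsilon})\psi^{(\eta)}\rangle\, d\Sigma_\epsilon,
\]
where $\nu$ is the outward unit normal and $\slashed D^+_{\Sigma_\epsilon}$ the induced (modified) boundary Dirac operator. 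The key computation is to recognize the integrand on the right as a boundary term whose leading part is governed by $H$, plus a contribution from the intrinsic geometry of $(\Sigma_\epsilon,g|_{\Sigma_\epsilon})$ that is most naturally expressed using the embedding into $\mathbb H^3$. Here I would use the standard trick of comparing the boundary Dirac operator of $\Sigma_\epsilon$ as the boundary of $\Omega_\epsilon$ with the boundary Dirac operator of the image of $X_\epsilon$ as a hypersurface in $\mathbb H^3$: since both are computed from the \emph{same} induced metric, they differ only through the extrinsic term, and restricting $\phi^{(\eta)}$ to the embedded surface $X_\epsilon(\Sigma_\epsilon)\subset\mathbb H^3$ produces precisely the $H_0$-dependent piece. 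Passing to the limit $\epsilon\to 0$, the cross terms involving $\psi^{(\eta)}-\phi^{(\eta)}$ vanish by the decay, and what survives is $-4\pi\langle\langle\mathfrak m_{BY}(\Sigma_\epsilon,X_\epsilon),\eta\rangle\rangle$ in the limit, the factor $-4\pi$ and the pairing with $\eta$ emerging from the explicit form of $\phi^{(\eta)}$ (its pointwise norm and Dirac current encode the position vector $X$ paired against $\eta$ in the integrand $\frac{1}{8\pi}\int_{\Sigma}(H_0-H)\langle\langle X,\eta\rangle\rangle\,d\Sigma$); combined with Theorem \ref{ktmain} this identifies the right-hand side with the stated limit.

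The main obstacle, as usual in this circle of ideas, is controlling the boundary term as $\epsilon\to 0$ with uniform estimates: one must show that $\psi^{(\eta)}$ and its derivatives converge to $\phi^{(\eta)}$ fast enough, \emph{relative to} the blow-up of the conformal factor $\sinh^{-2}\rho$ in \eqref{expan}, so that all the error terms genuinely disappear in the limit and the finite boundary contribution is exactly the Brown--York expression with no spurious pieces. This requires pinning down the precise asymptotics of both $H$ and $H_0$ along $\Sigma_\epsilon$ --- for $H$ this follows from \eqref{asymexpcoll}--\eqref{remainder}, and for $H_0$ one needs the expansion of the isometric embedding $X_\epsilon$ provided by \cite{KT} (the estimate $K=\sinh^2\epsilon+O(\epsilon^5)$ and the ensuing control of the second fundamental form of $X_\epsilon(\Sigma_\epsilon)$ in $\mathbb H^3$). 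A secondary technical point is justifying that the limit in \eqref{newproof2} is independent of the choice of $X_\epsilon$: this is automatic once the formula is proved, since the left-hand side manifestly does not see $X_\epsilon$, but it must be checked that the solution $\psi^{(\eta)}$ itself does not depend on the embedding, only on the asymptotic data at the conformal infinity. The rigidity statement (equality case) then follows from \eqref{newproof2} together with Theorem \ref{mmain1}: equality forces $\widetilde\nabla^+\psi^{(\eta)}\equiv 0$ and $R_g\equiv -6$, whence $(M,g)$ carries an imaginary Killing spinor and standard arguments identify it with $\mathbb H^3$.
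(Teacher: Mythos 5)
Your proposal follows the same overall strategy as the paper: run Witten's argument with the Killing connection $\widetilde\nabla^+$, construct $\psi^{(\eta)}$ asymptoting an explicit imaginary Killing spinor on $\mathbb H^3$, apply the Lichnerowicz identity (\ref{parts3}) on $\Omega_\epsilon$, insert $\pm\frac{H_0}{2}|\psi|^2$ into the boundary term, and pass to the limit. The ingredients you name (gauge identification of the two infinities, intrinsicality of the boundary Dirac operator, cancellation of cross terms by decay, the identity $|\phi^{(\eta)}|^2=-\langle\langle X,\eta\rangle\rangle$ producing the factor $-4\pi$) are all present in the paper.

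There are, however, two points worth flagging. First, the crucial step is not ``comparing boundary Dirac operators'' in general, but showing that the \emph{residual} boundary contribution
\[
\int_{\Sigma_\epsilon}\left(\langle \widetilde D^{\Sigma_\epsilon}\phi_*^{(\eta)},\phi_*^{(\eta)}\rangle-\frac{H_0}{2}|\phi_*^{(\eta)}|^2\right)d\Sigma_\epsilon
\]
tends to zero. The mechanism in the paper is specific: this integral equals ${\rm Re}\int_{\Sigma_\epsilon}\langle\widetilde\Wc^0\phi_*^{(\eta)},\phi_*^{(\eta)}\rangle$ (the operator $\widetilde\Wc^0$ being formed from the \emph{background} hyperbolic metric $g_0$ on the collar), and one uses the $L^2$-self-adjointness of $\widetilde\Wc^0$, the fact that $\widetilde\Wc^0\phi^{(\eta)}=0$ for a genuine imaginary Killing spinor, and the gauge estimate $|\Bc-I|+|\nabla^0\Bc|=O(\epsilon^3)$ from (\ref{gauge}) to conclude that the whole thing is $O(\epsilon^3)$. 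Your phrase ``restricting $\phi^{(\eta)}$ to $X_\epsilon(\Sigma_\epsilon)\subset\mathbb H^3$'' conflates two different copies of $\mathbb H^3$: the background $g_0$ on the collar, where $\phi^{(\eta)}$ actually lives and on which the Killing-spinor cancellation takes place, and the target of the isometric embedding $X_\epsilon$, which is not canonically related to the first and on which $\phi^{(\eta)}$ is not defined. Their mean curvatures of $\Sigma_\epsilon$ differ by $O(\epsilon^5)$, which is negligible, but the distinction is precisely where the argument can go wrong if not tracked. Second, Theorem \ref{ktmain} plays no role in the proof of this identity: the right-hand side of (\ref{newproof2}) is obtained directly from $|\phi^{(z)}|^2=-\langle\langle X,\eta\rangle\rangle$ and holds for \emph{every} choice of $X_\epsilon$, without knowing that any particular sequence of embeddings recovers Wang's mass; the appeal to Kwong--Tam is only needed afterwards (Remark \ref{corolll}) to specialize. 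Finally, your rigidity paragraph is fine but is not part of Theorem \ref{mmain3}; it belongs to Theorem \ref{mmain1}.
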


Here, $\widetilde{\nabla}^+$ is the Killing connection acting on spinors; see (\ref{killingcon}).

\begin{remark}
\label{corolll}
{\rm 
If we take $X_\epsilon=X^{(\epsilon)}$, the Kwong-Tam normalized embeddings appearing in Theorem \ref{ktmain}, then (\ref{newproof2}) yields \begin{equation}\label{newproof22}
\int_M\left(|\widetilde \nabla^+\psi^{(\eta)}|^2+\frac{R_g+6}{4}|\psi^{(\eta)}|^2\right)dM
=-4\pi\langle\langle \Upsilon_{(M,g)},\eta\rangle\rangle,
\end{equation}
which is precisely the mass formula appearing in \cite{Wa}; see also \cite{CH}. 
Moreover, in the spirit of Remark \ref{back}, we might consider time components and then send $\kappa\to 0$. Clearly, the limiting $3$-manifold $(M,g)$ is asymptotically flat and we will eventually obtain
\begin{equation}\label{neproof3}
\int_M\left(|\nabla\psi|^2+\frac{R_g}{4}|\psi|^2\right)dM=4\pi\lim_{r\to +\infty}\mathfrak m_{BY}(\Sigma_r),
\end{equation}
where $r$ is the asymptotic parameter labeling the coordinate spheres and  $\psi$ is a harmonic spinor on $M$ which approaches a parallel spinor of unit norm at infinity. Since it is well-known \cite{BY} \cite{ST1} that the limit in the right-hand side equals  the ADM mass of $(M,g)$ we thus obtain Witten's celebrated formula for this mass \cite{Wi}.
}
\end{remark}

The results above are obtained by exploring the full power of Witten's method, which consists of finding a suitable spinor globally defined on $M$. If one is interested, however, in merely obtaining the inequality in (\ref{limit}), this can be accomplished by appealing  to the theory of boundary value problems for the Dirac operators on {\em compact} domains. This approach is inspired on recent work by Hijazi, Montiel and Raulot \cite{HM} \cite{HMRa}.
We thus consider the vector-valued quantity
\begin{equation}\label{noss}
\widehat{\mathfrak m}(\Sigma,X)=\frac{1}{8\pi}\int_{\Sigma}\frac{H_0^2-H^2}{H+2}Xd\Sigma,
\end{equation}
where we assume that $\Sigma$ is the (spherical) boundary of a compact $3$-manifold with mean curvature $H>-2$, Gauss curvature $K>-1$ and $X:\Sigma\to \mathbb R^{3,1}$ is the position vector of an isometric embedding $\Sigma\subset \mathbb H^3$ whose mean curvature is $H_0$.

\begin{theorem}\label{main1}
Under these conditions, if we assume further that $H$ is constant then
\begin{equation}\label{main12}
\langle\langle \widehat{\mathfrak m}(\Sigma, X),\eta\rangle\rangle\leq 0, \quad \eta \in\Cc^+,
\end{equation}
for any isometric embedding $X:\Sigma\to\mathbb H^3$.
Moreover, equality holds if and only if $\Omega\subset \mathbb H^3$ is a round ball.
\end{theorem}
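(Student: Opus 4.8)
The plan is to follow the Hijazi--Montiel--Raulot philosophy \cite{HM,HMRa}: first reduce (\ref{main12}) to a weighted spinorial $L^2$-inequality on $\Sigma$, and then establish that inequality by solving an elliptic boundary value problem for a Dirac-type operator on a compact filling $\Omega$ of $\Sigma$ (so $\partial\Omega=\Sigma$, and $H$ is the mean curvature of $\Sigma\subset\Omega$). Fix $\eta\in\Cc^+$. As in Section \ref{proofmain1}, attach to it the imaginary Killing spinor $\phi^{(\eta)}$ on $\mathbb H^3$ (characterized by $\widetilde\nabla^{+}\phi^{(\eta)}=0$), normalized so that its pointwise squared norm equals the affine function $-\langle\langle\,\cdot\,,\eta\rangle\rangle$ on $\mathbb H^3\subset\mathbb R^{3,1}$. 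Restricting $\phi^{(\eta)}$ to the isometric embedding $\Sigma\subset\mathbb H^3$ (which exists and is unique up to isometry by Pogorelov's theorem, since $K>-1$) produces a spinor $\chi$ on $(\Sigma,\gamma)$, $\gamma$ being the induced metric, with $|\chi|^2=-\langle\langle X,\eta\rangle\rangle>0$ everywhere; hence (\ref{main12}) is equivalent to
\[
\int_{\Sigma}\frac{H_0^2-H^2}{H+2}\,|\chi|^2\,d\Sigma \geq 0,
\]
with equality precisely when $\Omega$ is a round ball in $\mathbb H^3$. Restricting the Killing equation along $\Sigma$ and using the spinorial Gauss formula shows that $\chi$ satisfies a boundary Dirac equation of the schematic form $D^{\Sigma}\chi=\tfrac{H_0}{2}\,\nu\cdot\chi+(\text{zeroth order})$, where $D^{\Sigma}$ is the boundary Dirac operator and $\nu\cdot$ denotes Clifford multiplication by the unit normal; squaring and integrating this equation yields the identity $\int_{\Sigma}H_0^{2}|\chi|^{2}\,d\Sigma=4\int_{\Sigma}|D^{\Sigma}\chi|^{2}\,d\Sigma+4\int_{\Sigma}|\chi|^{2}\,d\Sigma$, the cross terms dropping out because $\nu\cdot$ is skew-adjoint and anticommutes with $D^{\Sigma}$. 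The decisive structural point is that $\gamma$, the (unique) spin structure on the sphere $\Sigma$, and hence $D^{\Sigma}$, are common to the two fillings $\Sigma\subset\Omega$ and $\Sigma\subset\mathbb H^3$, so that the problem has been reduced to an estimate involving only the intrinsic operator $D^{\Sigma}$, the constant $H$ and the spinor $\chi$.

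I would then solve, on $\Omega$, a boundary value problem for the Killing--Dirac operator $\widetilde D^{+}$ under an elliptic self-adjoint boundary condition of MIT-bag type, the associated chirality projectors along $\Sigma$ being normalized by the factor $H+2$. The hypotheses that $H>-2$ and that $H$ be \emph{constant} enter precisely here, guaranteeing both the ellipticity of the boundary condition (the bag operator has everywhere-constant modulus $H+2>0$) and the solvability of the problem with the bag-part of the boundary trace prescribed to match that of $\chi$. Feeding the solution $\psi$ into the integrated Schr\"odinger--Lichnerowicz (Reilly-type) identity for $\widetilde D^{+}$ on $\Omega$, and rewriting the boundary term by means of the bag condition together with the boundary equation satisfied by $\chi$, one arrives at a relation of the form
\[
\bigl(\text{a manifestly nonnegative bulk integral over }\Omega\bigr) = c\int_{\Sigma}\frac{H_0^2-H^2}{H+2}\,|\chi|^2\,d\Sigma
\]
with a positive constant $c$, which gives the inequality. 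In the equality case one reads off from the bulk term that $\psi$ is a nowhere-vanishing imaginary Killing spinor on $(\Omega,g)$ (nowhere vanishing, since the Hessian identity satisfied by the norm of an imaginary Killing spinor together with $|\psi|^2|_{\Sigma}=|\chi|^2>0$ forces $|\psi|^2>0$ in the interior); the vanishing of the curvature of $\widetilde\nabla^{+}$ along $\psi$ then forces $(\Omega,g)$ to have constant sectional curvature $-1$, hence, being simply connected with boundary a sphere, to be isometric to a domain of $\mathbb H^3$, and matching $\psi$ with $\chi$ identifies this domain with the region enclosed by the embedded $\Sigma$; since $H=H_0$ is constant, $\Sigma$ must be totally umbilic, i.e. a geodesic sphere, so $\Omega$ is a round ball. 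Conversely a round ball has $H\equiv H_0$ and $\widehat{\mathfrak m}=0$, hence equality.

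The step I expect to be the main obstacle is the second one, and within it the algebra of the boundary term: one has to reverse-engineer the bag operator so that the boundary integrand of the Reilly identity assembles \emph{exactly} into $\tfrac{H_0^2-H^2}{H+2}|\chi|^2$ — this is what dictates the precise shape of $\widehat{\mathfrak m}$, and in particular why the weight is $H+2$ and not $H$ — while at the same time keeping the boundary value problem elliptic, Fredholm and genuinely solvable with the prescribed datum, and ensuring the bulk side has the correct sign. Reconciling these requirements, and checking the compatibility of the boundary condition along $\Sigma$ (that the bag-part of $\chi$ is an admissible datum), is where the real work lies; by contrast, the reduction in the first paragraph and the rigidity analysis are essentially formal once the identity is in place.
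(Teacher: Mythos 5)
Your overall strategy --- restrict an imaginary Killing spinor from $\mathbb H^3$ to $\Sigma$, solve an MIT-bag boundary value problem for the Killing--Dirac operator on $\Omega$, and feed the solution into the integrated Lichnerowicz/Reilly identity --- is the same as the paper's. Your integral identity $\int_\Sigma H_0^2|\chi|^2\,d\Sigma = 4\int_\Sigma|D^\Sigma\chi|^2\,d\Sigma + 4\int_\Sigma|\chi|^2\,d\Sigma$ is also correct, but the reason you give for it is not. The actual boundary Dirac equation is $D^\Sigma\hat\phi = \tfrac{H_0}{2}\hat\phi - i\gamma^0(\nu^0)\hat\phi$, and the cross term in $|D^\Sigma\hat\phi|^2$ is $-H_0\langle i\gamma^0(\nu^0)\hat\phi,\hat\phi\rangle = H_0\,\nu^0(|\hat\phi|^2)$; this is \emph{real} (since $i\gamma(\nu)$ is self-adjoint, not skew) and does not drop pointwise. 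It is eliminated only after applying the Minkowski-type identity $\Delta_\Sigma|\hat\phi|^2 = 2|\hat\phi|^2 + H_0\,\nu^0(|\hat\phi|^2)$ (a consequence of ${\rm Hess}^0|\hat\phi|^2 = |\hat\phi|^2 g_0$) and integrating the Laplacian over the closed surface.

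The more serious gap is that you have misplaced the role of the hypothesis that $H$ is constant. Neither the ellipticity of the MIT-bag condition nor the solvability of the boundary value problem (Proposition \ref{bvpmr}) nor the holographic inequality $\int_\Sigma\left(\frac{|D^\Sigma\phi|^2}{H+2}-\frac{H-2}{4}|\phi|^2\right)d\Sigma \geq 0$ (Proposition \ref{estimat}) requires $H$ constant: all of that holds whenever $H>-2$. Nor does the weight $H+2$ come from a modified bag projector --- the projectors are just $P_\pm=\tfrac12({\rm Id}\pm i\gamma(\nu))$, independent of $H$; the weight arises from a Young/Cauchy--Schwarz step applied to the boundary term \emph{after} imposing the bag condition (so, in particular, the boundary and bulk terms are related by a chain of inequalities, not the exact equality you posit). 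The constancy of $H$ enters only at the last step: substituting $\phi=\hat\phi$ into the holographic inequality produces, via (\ref{asin}), the extra term $4\int_\Sigma\frac{\Delta_\Sigma|\hat\phi|^2}{H+2}\,d\Sigma$, and it is precisely when $\tfrac{1}{H+2}$ is constant that this integral vanishes by Stokes. As written, your sketch looks for the constancy in the Fredholm theory, where it is not needed, and provides no mechanism to dispose of the Laplacian term that unavoidably appears.
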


A result 
by Neves and Tian \cite{NT} says that any asymptotically hyperbolic manifold satisfying ${\rm tr}_{h_0}h>0$
has the property that a neighborhood of infinity can be uniquely foliated by stable constant mean curvature surfaces. Moreover, a recent result by Ambrozio \cite{Am} implies that for any sufficiently small perturbation of an anti-de-Sitter-Schwarzschild space of positive mass the Neves-Tian foliation can be extend to a {\em global} foliation by constant mean curvature surfaces. This provides examples of domains for which Theorem \ref{main1} and its generalization described in Remark \ref{lamb} below apply.  
But notice that, if compared with the main results in \cite{HM} \cite{HMRa}, $\widehat{\mathfrak m}(\Sigma,X)$ has an obvious drawback, namely, it only has the expected causal character under the rather stringent constant mean curvature condition on $\Sigma\subset\Omega$.
Nevertheless, our next results show not only that
$\widehat{\mathfrak m}(\Sigma,X)$ has very nice causal properties when evaluated on the asymptotic limit (Theorem \ref{main2}) but also that at infinity it captures the corresponding limit of the hyperbolic Brown-York mass (Theorem \ref{main3}). 

\begin{theorem}\label{main2}
If $(\Omega_\epsilon,\Sigma_\epsilon)$ is any coordinate exhaustion of an asymptotically hyperbolic  $3$-manifold $(M,g)$ satisfying $R_g\geq -6$ then \begin{equation}\label{limit2}
\langle\langle\lim_{\epsilon\to 0}\widehat{\mathfrak m}(\Sigma_\epsilon,X_\epsilon),\eta\rangle\rangle\leq 0, \quad \eta\in\Cc^+,
\end{equation}
for any choice of isometric embeddings $X_\epsilon:\Sigma_\epsilon\to\mathbb H^3$.
\end{theorem}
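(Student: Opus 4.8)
The plan is to reduce Theorem \ref{main2} to the corresponding statement for the hyperbolic Brown--York mass, Theorem \ref{mmain1}, by showing that along any coordinate exhaustion the two quantities $\widehat{\mathfrak m}(\Sigma_\epsilon,X_\epsilon)$ and $\mathfrak m_{BY}(\Sigma_\epsilon,X_\epsilon)$ have the same limit at infinity. The point of departure is the pointwise algebraic identity
\begin{equation}\label{plan-id}
\frac{H_0^2-H^2}{H+2}-(H_0-H)=\frac{(H_0-H)(H_0-2)}{H+2},
\end{equation}
valid on any surface with $H>-2$. Integrating (\ref{plan-id}) against the position vector over $\Sigma_\epsilon$ and recalling (\ref{massintloc}) and (\ref{noss}), one obtains
\begin{equation}\label{plan-diff}
\widehat{\mathfrak m}(\Sigma_\epsilon,X_\epsilon)-\mathfrak m_{BY}(\Sigma_\epsilon,X_\epsilon)=\frac{1}{8\pi}\int_{\Sigma_\epsilon}\frac{(H_0-H)(H_0-2)}{H+2}\,X_\epsilon\,d\Sigma_\epsilon ,
\end{equation}
so that the whole matter reduces to showing that the right-hand side of (\ref{plan-diff}) tends to $0$ as $\epsilon\to0$ for every choice of isometric embeddings $X_\epsilon$.

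To this end I would collect the asymptotics of the ingredients. From the expansion (\ref{asymexpcoll}) one has $H=2+O(\epsilon^2)$ for the mean curvature of $\Sigma_\epsilon$ in $(M,g)$, while the computation in \cite{KT} gives that the Gauss curvature of the induced metric equals $\sinh^2\epsilon+O(\epsilon^5)$; consequently the Pogorelov embedding places $\Sigma_\epsilon$ $C^2$-close to a geodesic sphere of $\mathbb H^3$ of radius $s_\epsilon$ with $\sinh s_\epsilon\sim\epsilon^{-1}$, so that $H_0=2+O(\epsilon^2)$ and hence $H_0-H=O(\epsilon^2)$, whereas $|X_\epsilon|=O(\epsilon^{-1})$ and $\mathrm{Area}(\Sigma_\epsilon)=O(\epsilon^{-2})$. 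Feeding these bounds into (\ref{plan-diff}) gives
\[
\bigl|\widehat{\mathfrak m}(\Sigma_\epsilon,X_\epsilon)-\mathfrak m_{BY}(\Sigma_\epsilon,X_\epsilon)\bigr|\le C\,\epsilon^{2}\cdot\epsilon^{2}\cdot\epsilon^{-1}\cdot\epsilon^{-2}=C\,\epsilon\longrightarrow0 .
\]
Since $\lim_{\epsilon\to0}\mathfrak m_{BY}(\Sigma_\epsilon,X_\epsilon)$ exists --- it equals $\Upsilon_{(M,g)}$ for the Kwong--Tam normalization by (\ref{ktmaineq}), and the same holds for a general embedding once one reduces to the normalized one as in the proof of Theorem \ref{mmain1} --- it follows that $\lim_{\epsilon\to0}\widehat{\mathfrak m}(\Sigma_\epsilon,X_\epsilon)=\lim_{\epsilon\to0}\mathfrak m_{BY}(\Sigma_\epsilon,X_\epsilon)$. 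Pairing with an arbitrary $\eta\in\Cc^+$ and invoking Theorem \ref{mmain1} then yields (\ref{limit2}).

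The step I expect to be the main obstacle is the uniform control of $|X_\epsilon|$ along the exhaustion for a \emph{general} isometric embedding: a priori the embedding of $\Sigma_\epsilon$ into $\mathbb H^3$ is determined only up to an ambient isometry, and a sequence of isometries drifting off to infinity would both destroy the bound $|X_\epsilon|=O(\epsilon^{-1})$ and prevent $\lim_{\epsilon\to0}\mathfrak m_{BY}(\Sigma_\epsilon,X_\epsilon)$ from existing in the first place. I would handle this exactly as in the treatment of Theorems \ref{ktmain} and \ref{mmain1}: using Pogorelov uniqueness to write a general embedding as the normalized Kwong--Tam embedding composed with an ambient isometry, and exploiting that, for the limit in (\ref{limit}) --- equivalently in (\ref{limit2}) --- to make sense, the relevant part of that isometry stays bounded. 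Granting this, the remaining ingredients, namely the $O(\epsilon^2)$ decay of $H-2$ and $H_0-2$ and the $O(\epsilon^{-1})$ growth of $|X_\epsilon|$, are read off from the expansions already established in \cite{KT}, and the conclusion follows at once from Theorem \ref{mmain1}.
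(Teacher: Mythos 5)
Your argument is logically correct, but it takes a genuinely different route from the paper, and the difference matters for the paper's overall structure. You reduce Theorem \ref{main2} to Theorem \ref{mmain1} by showing (via the correct algebraic identity $\frac{H_0^2-H^2}{H+2}-(H_0-H)=\frac{(H_0-H)(H_0-2)}{H+2}$ together with the asymptotics from \cite{KT}) that $\widehat{\mathfrak m}(\Sigma_\epsilon,X_\epsilon)-\mathfrak m_{BY}(\Sigma_\epsilon,X_\epsilon)\to 0$, which is essentially the content of Theorem \ref{main3}, and then you invoke the causal character of $\lim\mathfrak m_{BY}$ from Theorem \ref{mmain1}. The paper does the opposite: it proves Theorem \ref{main2} \emph{independently} of Theorem \ref{mmain1}, from the holographic inequality of Proposition \ref{mainhyp} (itself a consequence of the compact-domain boundary-value theory of Proposition \ref{estimat}) together with Proposition \ref{funclim}, which shows the Laplacian term tends to zero. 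The paper then uses Proposition \ref{funclim0} to get Theorem \ref{main3}, and the combination \ref{main2}$+$\ref{main3} is offered precisely as an \emph{alternative} proof of the inequality (\ref{limit}), one that avoids the global Witten construction on the noncompact manifold $M$ and uses only elliptic boundary-value theory on the compact domains $\Omega_\epsilon$. Your proof, by routing through Theorem \ref{mmain1}, recycles the global spinor argument rather than bypassing it, so it cannot play this role. Two smaller remarks: your bound $H_0-H=O(\epsilon^2)$ is weaker than the $O(\epsilon^3)$ estimate (\ref{asymp1})--(\ref{asymp2}) from \cite{KT}; it suffices to control the \emph{difference} of the two masses, but the existence of each individual limit (which you invoke) already requires the sharper decay, so you are implicitly relying on it. And the issue you flag about controlling $|X_\epsilon|$ for a general sequence of embeddings is real; the paper handles it, as in \cite{KT}, by absorbing the ambient isometry into the choice of Killing spinor via (\ref{gives}), which is cleaner than normalizing the embeddings.
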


\begin{theorem}\label{main3}
If $(\Omega_\epsilon,\Sigma_\epsilon)$ is any coordinate exhaustion of an asymptotically hyperbolic $3$-manifold $(M,g)$ then \begin{equation}\label{limit3}
\lim_{\epsilon\to 0}\widehat{\mathfrak m}(\Sigma_\epsilon,X_\epsilon)=\lim_{\epsilon\to 0}{\mathfrak m}_{BY}(\Sigma_\epsilon,X_\epsilon),
\end{equation}
for any choice of isometric embeddings $X_\epsilon:\Sigma_\epsilon\to\mathbb H^3$. In particular, the mass inequality (\ref{limit}) holds under the conditions of Theorem \ref{mmain1}. 
\end{theorem}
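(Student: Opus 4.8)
The plan is to prove Theorem \ref{main3} by a direct asymptotic computation comparing the two integrands in (\ref{massintloc}) and (\ref{noss}) along a coordinate exhaustion, and then invoke Theorem \ref{main2} together with Theorem \ref{ktmain} to deduce the mass inequality. First I would recall from \cite{KT} the precise asymptotics of the relevant geometric quantities on $\Sigma_\epsilon=\rho^{-1}(\epsilon)$: writing the metric in the normal form (\ref{expan})--(\ref{asymexpcoll}), one has (as $\epsilon\to 0$) that the induced metric on $\Sigma_\epsilon$ is $\sinh^{-2}\epsilon\,(h_0+\tfrac{\epsilon^3}{3}h+e)$, the mean curvature $H$ of $\Sigma_\epsilon\subset\Omega_\epsilon$ admits an expansion of the form $H=2+O(\epsilon^3)$ (more precisely $H=2-\tfrac{\epsilon^3}{3}\,\mathrm{tr}_{h_0}h+\cdots$), and the mean curvature $H_0$ of the Pogorelov embedding $X_\epsilon:\Sigma_\epsilon\to\mathbb H^3$ satisfies the analogous expansion $H_0=2+O(\epsilon^3)$. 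The key elementary observation is that since both $H\to 2$ and $H_0\to 2$ uniformly on $\Sigma_\epsilon$, the algebraic identity
\[
\frac{H_0^2-H^2}{H+2}=\frac{(H_0-H)(H_0+H)}{H+2}=(H_0-H)\left(1+\frac{H_0-2}{H+2}\right)
\]
shows that $\dfrac{H_0^2-H^2}{H+2}=(H_0-H)\bigl(1+O(\epsilon^3)\bigr)$ pointwise on $\Sigma_\epsilon$.

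Next I would control the remaining factors in the integrands. The position vector $X_\epsilon$ of a geodesic-sphere-like surface in $\mathbb H^3\subset\mathbb R^{3,1}$ has norm of order $\cosh(\mathrm{dist})\sim \sinh^{-1}\epsilon$, i.e.\ $|X_\epsilon|=O(\epsilon^{-1})$, while the area element satisfies $d\Sigma_\epsilon=\sinh^{-2}\epsilon\,(1+O(\epsilon^3))\,d\mu_{h_0}=O(\epsilon^{-2})\,d\mu_{h_0}$. The known fact (used already in \cite{KT} to prove Theorem \ref{ktmain}) is that $H_0-H=O(\epsilon^3)$ on $\Sigma_\epsilon$, so that the integrand $(H_0-H)X_\epsilon$ with respect to $d\Sigma_\epsilon$ is $O(\epsilon^3)\cdot O(\epsilon^{-1})\cdot O(\epsilon^{-2})=O(1)$ — exactly the borderline rate that makes the limit in (\ref{ktmaineq}) finite and nonzero. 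Therefore the difference of the two integrands,
\[
\left(\frac{H_0^2-H^2}{H+2}-(H_0-H)\right)X_\epsilon\,d\Sigma_\epsilon
=(H_0-H)\cdot O(\epsilon^3)\cdot X_\epsilon\,d\Sigma_\epsilon=O(\epsilon^3)\cdot O(1)=O(\epsilon^3),
\]
which tends to $0$ after integrating over $\Sigma_\epsilon$ (whose $h_0$-area is bounded). Hence $\widehat{\mathfrak m}(\Sigma_\epsilon,X_\epsilon)$ and $\mathfrak m_{BY}(\Sigma_\epsilon,X_\epsilon)$ have the same limit, which is (\ref{limit3}). For the final sentence, combine (\ref{limit3}) with Theorem \ref{main2}: the latter gives $\langle\langle\lim_{\epsilon\to0}\widehat{\mathfrak m}(\Sigma_\epsilon,X_\epsilon),\eta\rangle\rangle\le 0$ for all $\eta\in\Cc^+$, and by (\ref{limit3}) the same inequality holds for $\lim_{\epsilon\to0}\mathfrak m_{BY}(\Sigma_\epsilon,X_\epsilon)$, which is precisely (\ref{limit}) in Theorem \ref{mmain1}.

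The main obstacle I anticipate is making the error estimates uniform over \emph{arbitrary} isometric embeddings $X_\epsilon$ rather than the specific Kwong--Tam normalized ones $X^{(\epsilon)}$. The quantities $H$ and $K$ on $\Sigma_\epsilon$ are intrinsic-plus-second-fundamental-form data of $(M,g)$ and are thus independent of the embedding; Pogorelov rigidity says the embedding is unique up to an isometry of $\mathbb H^3$, so $H_0$ and $|X_\epsilon|$ (the latter being the hyperbolic distance to whatever origin we use) are also essentially fixed, modulo the ambient isometry which only shifts $X_\epsilon$ by a bounded Lorentz transformation that does not affect the orders of magnitude. The point to be careful about is that $\langle\langle\cdot,\eta\rangle\rangle$ is not positive definite, so one cannot simply bound things in norm and must track the expansion of $(H_0-H)$ to the order where the $\epsilon^3 h$-term survives; however, since we are only subtracting the \emph{same} leading term from both integrands, the cancellation (\ref{limit3}) is insensitive to this subtlety — it is genuinely an $o(1)$ statement about the difference, not about each mass separately. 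A secondary technical point is verifying $H>-2$ and $K>-1$ on $\Sigma_\epsilon$ for small $\epsilon$ (so that $\widehat{\mathfrak m}$ and the Pogorelov embedding are even defined); but this is immediate from $H=2+O(\epsilon^3)$ and the stated $K=\sinh^2\epsilon+O(\epsilon^5)>0$.
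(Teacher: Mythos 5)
Your argument is essentially the paper's own proof of Theorem~\ref{main3}, which it establishes through Proposition~\ref{funclim0}: both rest on the asymptotics $H = 2+\epsilon^2 - \tfrac{\epsilon^3}{2}{\rm tr}_{h_0}h+O(\epsilon^4)$, $H_0=2+\epsilon^2+o(\epsilon^4)$, $H_0-H=O(\epsilon^3)$, $d\Sigma_\epsilon=O(\epsilon^{-2})d\mu_{h_0}$, and the observation that multiplying $H_0-H$ by the factor $\frac{H_0+H}{H+2}-1$ pushes the difference of the two integrands below the borderline $O(1)$ rate (the paper phrases this in terms of $|\hat\phi|^2 = -\langle\langle X,\eta\rangle\rangle$ rather than $X$ itself, which is a purely cosmetic difference). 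One small arithmetic slip: you assert $\frac{H_0-2}{H+2}=O(\epsilon^3)$, but $H_0-2=\epsilon^2+o(\epsilon^4)$ and $H+2=4+O(\epsilon^2)$ give $\frac{H_0-2}{H+2}=\frac{\epsilon^2}{4}+O(\epsilon^3)=O(\epsilon^2)$; accordingly the pointwise difference of integrands is $(H_0-H)\cdot O(\epsilon^2)=O(\epsilon^5)$ and after integration against the $O(\epsilon^{-1})\cdot O(\epsilon^{-2})$-sized measure the total error is $O(\epsilon^2)$, not $O(\epsilon^3)$ --- still $o(1)$, so the conclusion is unaffected. (Your invocation of Theorem~\ref{ktmain} in the opening sentence is unnecessary: for the last claim, combining (\ref{limit3}) with Theorem~\ref{main2} alone suffices, as you in fact do at the end.)
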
 

Notice that this approach does not give a proof of the rigidity statement in Theorem \ref{mmain2}. For this we need to apply alternative methods; see \cite{BaW} or Theorem \ref{mmain1} above.


\begin{remark}\label{wangyau}
{\rm Examples due to \'O Murchadha-Szabados-Tod \cite{OST} show that the classical Brown-York mass \cite{BY} and its generalization by Kijowski \cite{Ki} and Liu-Yau \cite{LY}, which use $\mathbb R^3$ as the reference space for the isometric embedding of the boundary, might be strictly positive even when the surface lies in $\mathbb R^{3,1}$, a result that clearly contradicts physical intuition. As confirmed by recent breakthroughs by Chen, Wang and Yau \cite{WY2} \cite{WY3} \cite{CWY}, it turns out that a much more satisfactory definition of quasi-local mass should use the full space-time $\mathbb R^{3,1}$ as reference space. Besides having many other interesting properties, this new quasi-local mass vanishes for any admissible surface lying in
$\mathbb R^{3,1}$. We would like to point out, however, that in the special but important time-symmetric case treated in this paper,  the Brown-York mass and its hyperbolic version (\ref{massintloc}) still provide rigidity results consistent with physical expectation, as illustrated by the various results described in this Introduction.}
\end{remark}

This paper is organized as follows. In Section \ref{dirac} and Appendix \ref{appendixA} we review the results on Dirac operators used throughout the text. This is applied to prove a sort of holographic principle for imaginary Killing spinors on domains whose scalar curvature is bounded from below by a negative constant (Proposition \ref{estimat}), which is the main ingredient in the proofs of Theorems \ref{main1}, \ref{main2} and \ref{main3} in
Section \ref{proofrest}.
The proofs of Theorems \ref{mmain1}, \ref{mmain2} and \ref{mmain3} also makes use of this spin machinery and are included in Section \ref{proofmain1}.


\section{Dirac operators on $3$-manifolds with boundary}\label{dirac}

In this section we review the results in the theory of Dirac operators on  $3$-manifolds needed in the rest of the paper.  The reader will find more detailed presentations of this preparatory  material in \cite{BFGK}, \cite{F},  \cite{HMR1}, \cite{HMR2}, \cite{HM} and \cite{HMZ}.
Even though much of the discussion below holds in any dimension, we will restrict ourselves to the physically relevant case $n=3$.

We consider an {\em orientable} $3$-manifold $\Omega$ endowed with a Riemannian metric $g$. It is well-known that such a manifold is automatically {spin}, which allows us to  fix once and for all a spin structure on  $T\Omega$. We denote by $\mathbb S\Omega$ the associated spin bundle and by $\nabla$ both the Levi-Civita connection of $T\Omega$
and its lift to $\mathbb S\Omega$.

If $\gamma:T\Omega\times\mathbb S\Omega\to\mathbb S\Omega$ is the Clifford product,
we define the {\em Killing connections} acting on a spinor $\psi\in\Gamma(\mathbb SM)$ by
\begin{equation}\label{killingcon}
\widetilde\nabla_X^\pm\psi=\nabla_X\psi\pm\frac{i}{2}\gamma(X)\psi,
\end{equation}
so that a spinor is parallel with respect to $\widetilde \nabla$ if and only if, by definition, it is an {\em imaginary Killing spinor}; see \cite{BFGK}. The corresponding {\em Killing-Dirac operators} are defined in the standard way, namely,
\begin{equation}\label{killingdirac}
\widetilde D^\pm\psi=\sum_{i=1}^3\gamma(e_i)\widetilde\nabla_{e_i}^\pm\psi,
\end{equation}
so that
\begin{equation}\label{killdirexp}
\widetilde D^\pm=D\mp\frac{3i}{2},
\end{equation}
where 
\[
D\psi=\sum_{i=1}^3\gamma(e_i)\nabla_{e_i}\psi
\]
is the standard Dirac operator. We say that a spinor $\psi$ is {\em Killing-harmonic} if it satisfies any of the linear equations $\widetilde D^{\pm}\psi=0$. 


Given a spinor $\psi$ we  set
\[
\widetilde\omega^\pm(X)=-\langle \widetilde{\Wc}^\pm(X)\psi,\psi\rangle, \quad X\in \Gamma(T\Omega),
\]
where 
\[
\widetilde \Wc^{\pm}(X)=-(\widetilde\nabla_X^\pm+\gamma(X) \widetilde D^\pm).
\]
We easily compute that
\[
{\rm div}\,\widetilde\omega^\pm=|\widetilde\nabla^\pm \psi|^2-|\widetilde D^\pm\psi|^2+\frac{R_g+6}{4}|\psi|^2,
\]
so if $\Omega$
is compact with a nonempty boundary $\Sigma$, which we assume oriented by its inward pointing unit normal $\nu$, then integration by parts yields the integral version of the fundamental Lichnerowicz formula, namely, 
\begin{equation}\label{partshyp}
\int_\Omega\left(|\widetilde\nabla^\pm \psi|^2-|\widetilde D^\pm\psi|^2+\frac{R_g+6}{4}|\psi|^2\right)d\Omega={\rm Re}\int_{\Sigma}
\left\langle \widetilde\Wc^\pm(\nu)\psi,\psi\right\rangle d\Sigma.
\end{equation}

A key step in our argument is to rewrite the right-hand side of (\ref{partshyp}) in terms of the geometry of $\Sigma$. First note that
$\Sigma$ carries the spin bundle $\mathbb S\Omega|_\Sigma$, obtained by restricting  $\mathbb S\Omega$ to $\Sigma$. This becomes a Dirac bundle if its Clifford product is
$$
\gamma^{\intercal}(X)\psi=\gamma(X)\gamma(\nu) \psi,\quad X\in \Gamma(T\Sigma), \quad \psi\in \Gamma(\mathbb S\Omega|_\Sigma),
$$
and its connection is
\begin{equation}\label{conn0}
\nabla^{\intercal}_X\psi  =  \nabla_X\psi-\frac{1}{2}\gamma^{\intercal}(AX)\psi,
\end{equation}
where $A=-\nabla\nu$ is the shape operator of $\Sigma$.
The corresponding Dirac operator $D^{\intercal}:\Gamma(\mathbb S\Omega|_\Sigma)\to\Gamma(\mathbb S\Omega|_\Sigma)$ is
$$
D^{\intercal}\psi=\sum_{j=1}^{3}\gamma^{\intercal}(f_j)\nabla^{\intercal}_{f_j}\psi,
$$
where $\{f_j\}_{j=1}^{2}$ is a local orthonormal tangent frame to $\Sigma$.
Imposing that $Af_j=\kappa_jf_j$, where $\kappa_j$ are the principal curvatures of $\Sigma$, we have
\begin{equation}\label{form1}
D^{\intercal}\psi=-\gamma(\nu)\Dc\psi+\frac{H}{2}\psi,
\end{equation}
where $H=\kappa_1+\kappa_2$ is the mean curvature and
\begin{equation}\label{form2}
\Dc=\gamma(\nu)\left(D^{\intercal}-\frac{H}{2}\right)=
\sum_{j=1}^{2}\gamma(f_j)\nabla_{f_j}\psi.
\end{equation}
Since $\Dc=D-\gamma(\nu)\nabla_{\nu}$, we obtain
\begin{equation}\label{par}
D^{\intercal}\psi=\frac{H}{2}\psi-(\nabla_\nu+\gamma(\nu)D)\psi.
\end{equation}

We now observe that since the unit normal field $\nu$ provides an orientation for the normal bundle of $\Sigma$,
$T\Sigma$ carries a preferred spin structure with spin bundle $\mathbb S\Sigma$.
As usual we denote by $\nabla^{\Sigma}$ both the Levi-Civita connection on $T\Sigma$ and its lift to $\mathbb S\Sigma$. In particular, $\Sigma$ has an {\em intrinsic} Dirac operator $D^\Sigma:\Gamma(\mathbb S\Sigma)\to\Gamma(\mathbb S\Sigma)$
defined by
$$
D^{\Sigma}\varphi=\sum_{j=1}^{2}\gamma^{\Sigma}(f_j) \nabla^\Sigma_{f_j}\varphi,
$$
where $\gamma^\Sigma:T\Sigma\times \mathbb S\Sigma\to \mathbb S\Sigma$ is the Clifford product.
It turns out that the embedding $\Sigma\subset\Omega$ induces natural identifications $\mathbb S\Omega|_\Sigma=\mathbb S\Sigma$, $\nabla^\intercal=\nabla^\Sigma$ and  $\gamma^\intercal=\gamma^\Sigma$, so that
\begin{equation}\label{ident}
D^\intercal=D^\Sigma,
\end{equation}
and by (\ref{par}),
(\ref{partshyp}) becomes
\begin{equation}\label{parts3}
\int_\Omega\left(|\widetilde\nabla^{\pm} \psi|^2-|\widetilde D^\pm\psi|^2+\frac{R_g+6}{4}|\psi|^2\right)d\Omega=\int_{\Sigma}
\left(\langle \widetilde D^{\Sigma}_\pm\psi,\psi\rangle-\frac{H}{2}|\psi|^2\right) d\Sigma,
\end{equation}
where
\begin{equation}\label{newdirac}
\widetilde D^{\Sigma}_\pm=D^{\Sigma}\pm i\gamma(\nu):\Gamma(\mathbb S{\Sigma})\to
\Gamma(\mathbb S{\Sigma}).
\end{equation}
By Cauchy-Schwarz, we have $|\widetilde D^{\pm}\psi|^2\leq 3|\widetilde\nabla^\pm\psi|^2$, so that
\begin{equation}\label{parts4}
\int_{\Sigma}
\left(\langle \widetilde D^{\Sigma}_\pm\psi,\psi\rangle-\frac{H}{2}|\psi|^2\right) d\Sigma\geq
\int_\Omega\left(-\frac{2}{3}|\widetilde D^\pm\psi|^2+\frac{R_g+6}{4}|\psi|^2\right)d\Omega,
\end{equation}
with the equality occurring if and only if $\psi$ is a {\em twistor spinor}, i.e.
\begin{equation}\label{twistor}
\nabla_X\psi+\frac{1}{3}\gamma(X)D\psi=0.
\end{equation}

From this we immediately obtain an useful integral inequality for compact domains whose scalar curvature is bounded from below by a negative constant.

\begin{proposition}\label{montneg}
\cite{HMR2}
If  $(\Omega,g)$ satisfies $R_g\geq -6$ then
\begin{equation}\label{reillyhyp2}
\int_{\Sigma}
\left(\langle \widetilde D^{\Sigma}_\pm\psi,\psi\rangle-\frac{H}{2}|\psi|^2\right)d\Sigma\geq -\frac{2}{3}\int_\Omega\left|\widetilde D^\pm\psi\right|^2d\Omega,
\end{equation}
with the  equality occurring  if and only if $\psi$ is a twistor spinor and $R_g\equiv -6$.
\end{proposition}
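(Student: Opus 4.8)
The plan is to obtain the inequality directly from the chain of identities already assembled above, the only fresh ingredient being the curvature hypothesis, and then to run the equality discussion backwards through the two estimates that were used.

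First I would quote inequality (\ref{parts4}): for an arbitrary spinor $\psi$ on the compact domain $\Omega$,
\[
\int_{\Sigma}\left(\langle \widetilde D^{\Sigma}_\pm\psi,\psi\rangle-\frac{H}{2}|\psi|^2\right) d\Sigma\geq
\int_\Omega\left(-\frac{2}{3}|\widetilde D^\pm\psi|^2+\frac{R_g+6}{4}|\psi|^2\right)d\Omega,
\]
which is the Lichnerowicz-type identity (\ref{parts3}) combined with the pointwise Cauchy--Schwarz bound $|\widetilde D^\pm\psi|^2\leq 3|\widetilde\nabla^\pm\psi|^2$. Under the assumption $R_g\geq -6$ the term $\frac{R_g+6}{4}|\psi|^2$ is pointwise nonnegative, so discarding it yields exactly (\ref{reillyhyp2}).

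For the rigidity claim I would trace equality through these two steps. Equality in (\ref{reillyhyp2}) forces the Cauchy--Schwarz bound to be saturated at every point, i.e.\ $\widetilde\nabla^\pm_X\psi=-\frac{1}{3}\gamma(X)\widetilde D^\pm\psi$ for all $X$; substituting the definitions (\ref{killingcon}) and (\ref{killdirexp}) the terms $\pm\frac{i}{2}\gamma(X)\psi$ cancel and this reduces to the twistor equation (\ref{twistor}) for $\psi$. Equality also forces $\int_\Omega(R_g+6)|\psi|^2\,d\Omega=0$, hence $(R_g+6)|\psi|^2\equiv 0$ since the integrand is nonnegative. Now a nontrivial twistor spinor on a connected manifold has only isolated zeros, so $|\psi|^2>0$ on a dense open set, and together with $R_g+6\geq 0$ and the continuity of $R_g$ this gives $R_g\equiv -6$ throughout $\Omega$. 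Conversely, if $\psi$ solves (\ref{twistor}) and $R_g\equiv -6$, both intermediate estimates are equalities, so (\ref{reillyhyp2}) is an equality.

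I do not expect a genuine obstacle here: the analytic content (the Lichnerowicz formula, the integration by parts that produced the boundary term, and the identification $D^\intercal=D^\Sigma$) is all contained in (\ref{partshyp})--(\ref{parts4}). The only point needing a little care is the last step of the rigidity argument, namely passing from $(R_g+6)|\psi|^2\equiv 0$ to $R_g\equiv -6$, which rests on the isolated-zeros property of twistor spinors rather than on the boundary geometry.
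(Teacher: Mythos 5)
Your proof is correct and follows the route the paper itself sets up: equation (\ref{parts4}) is the integral Lichnerowicz identity (\ref{parts3}) combined with the pointwise twistor estimate $|\widetilde D^\pm\psi|^2\leq 3|\widetilde\nabla^\pm\psi|^2$, and (\ref{reillyhyp2}) is obtained by discarding the nonnegative bulk term $\frac{R_g+6}{4}|\psi|^2$. The paper merely cites \cite{HMR2} for this proposition and records only the twistor-spinor equality condition in (\ref{parts4}); you correctly supply the missing step in the rigidity discussion, namely that equality also forces $(R_g+6)|\psi|^2\equiv 0$, and you correctly observe that passing from this to $R_g\equiv -6$ requires the (nontrivial but standard) fact that nonzero twistor spinors have a nowhere dense zero set, a point the paper leaves implicit in the citation.
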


We use this for a Killing-harmonic spinor obtained by solving a suitable boundary value problem.
To explain this
we observe that $i\gamma(\nu):\mathbb S\Sigma\to \mathbb S\Sigma$ is a ({pointwise}) self-adjoint involution. We thus consider the corresponding projection operators
\[
P_{\pm}=\frac{1}{2}\left({\rm Id}_{\mathbb S\Sigma}\pm i\gamma(\nu)\right)
\]
onto the eigenbundles of $i\gamma(\nu)$.
For any $\phi\in\Gamma(\mathbb S\Sigma)$
we set
$\phi_\pm=P_\pm\phi$,
so that
\[
\phi=\phi_++\phi_-,
\]
a pointwise orthogonal decomposition. Also, since $D^\intercal\gamma(\nu)=-\gamma(\nu)D^\intercal$ we have
$D^\Sigma P_\pm=P_\mp D^\Sigma$ and hence
\begin{equation}\label{cancel1}
\langle D^\Sigma \phi,\phi\rangle=\langle D^\Sigma \phi_+,\phi_-\rangle+\langle D^\Sigma \phi_-,\phi_+\rangle.
\end{equation}
In particular, 
\begin{equation}
\label{cancell}
\langle D^\Sigma \phi_+,\phi_+\rangle=\langle D^\Sigma \phi_-,\phi_-\rangle=0.
\end{equation}
Upon integration of (\ref{cancel1}) we find that
\begin{equation}\label{curc}
\int_\Sigma\langle D^\Sigma\phi,\phi\rangle d\Sigma=2\int_\Sigma\langle D^\Sigma\phi_+,\phi_-\rangle d\Sigma.
\end{equation}
Finally, it is easy to see that
\begin{equation}\label{curc2}
|D^\Sigma \phi|^2=|D^\Sigma \phi_+|^2+|D^\Sigma \phi_-|^2.
\end{equation}

It turns out that the projections $P_\pm$ define local elliptic boundary conditions for $\widetilde D^\pm$, as the following result shows. These are the so-called {\em MIT bag boundary conditions}.

\begin{proposition}\label{bvpmr}
Let $(\Omega,g)$ be as above with $R_g\geq -6$ and $H> -2$. Then, for any $\phi\in\Gamma(\mathbb S{\Sigma})$, the boundary value problem
\[
\left\{
\begin{array}{rclc}
\widetilde D^\pm\psi^\pm & = &  0 & {\rm on}\,\Omega \\
\psi^\pm_\pm & = &  \phi_{\pm} & {\rm on}\,\Sigma
\end{array}
\right.
\]
has a unique smooth solution $\psi^{\pm}\in\Gamma(\mathbb S\Omega)$.
\end{proposition}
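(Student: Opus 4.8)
\emph{Proof proposal.} The plan is to recast the stated system as an elliptic boundary value problem for a first-order operator and to solve it by the Fredholm alternative, using the integral Lichnerowicz-type identity (\ref{parts3}) to show that both the kernel and the cokernel vanish. By (\ref{killdirexp}), $\widetilde D^\pm=D\mp\tfrac{3i}{2}$ is a zeroth-order perturbation of the Dirac operator $D$, hence a first-order elliptic operator with the same principal symbol as $D$. Since $i\gamma(\nu)$ is a pointwise self-adjoint involution of $\mathbb S\Sigma$, the MIT-bag projections $P_\pm$ define local elliptic (Shapiro--Lopatinski) boundary conditions for $D$, and therefore also for $\widetilde D^\pm$; this is part of the general theory of elliptic boundary value problems for Dirac operators, see \cite{HMR1,HMR2,HMZ}. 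Consequently the operator
\begin{equation*}
\mathcal B_\pm:H^1(\Omega,\mathbb S\Omega)\longrightarrow L^2(\Omega,\mathbb S\Omega)\oplus H^{1/2}(P_\pm\mathbb S\Sigma),\qquad \psi\longmapsto\bigl(\widetilde D^\pm\psi,\ P_\pm(\psi|_\Sigma)\bigr),
\end{equation*}
is Fredholm, and elliptic boundary regularity ensures that any solution with smooth data lies in $C^\infty(\overline\Omega)$. It therefore suffices to show that $\mathcal B_\pm$ is an isomorphism, for then $\psi^\pm:=\mathcal B_\pm^{-1}(0,\phi_\pm)$ is the desired unique smooth solution.

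Injectivity is where (\ref{parts3}) enters. Suppose $\widetilde D^\pm\psi=0$ on $\Omega$ and $P_\pm(\psi|_\Sigma)=0$; then $\psi=\psi_\mp$ on $\Sigma$, so $i\gamma(\nu)\psi=\mp\psi$ there, while $\langle D^\Sigma\psi,\psi\rangle=0$ by (\ref{cancell}). Hence the boundary integrand in (\ref{parts3}) reduces to $-|\psi|^2-\tfrac{H}{2}|\psi|^2=-\tfrac12(H+2)|\psi|^2$. Since $\widetilde D^\pm\psi=0$ and $R_g\ge-6$, the left-hand side of (\ref{parts3}) is nonnegative, whereas the right-hand side equals $-\tfrac12\int_\Sigma(H+2)|\psi|^2\,d\Sigma\le 0$ because $H>-2$; thus both sides vanish. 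In particular $\psi\equiv 0$ on $\Sigma$ and $\widetilde\nabla^\pm\psi\equiv 0$ on $\Omega$. As a $\widetilde\nabla^\pm$-parallel spinor solves a linear first-order ODE along every path and is therefore determined by its value at a single point, we conclude $\psi\equiv 0$ on the (connected) manifold $\Omega$.

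For surjectivity I would identify the $L^2$-adjoint boundary value problem of $\mathcal B_\pm$. The Green formula for the pair $\widetilde D^\pm$, $\widetilde D^\mp$ takes the form
\begin{equation*}
\int_\Omega\bigl(\langle\widetilde D^\pm\psi,\varphi\rangle-\langle\psi,\widetilde D^\mp\varphi\rangle\bigr)\,d\Omega=\epsilon\int_\Sigma\langle\gamma(\nu)\psi,\varphi\rangle\,d\Sigma
\end{equation*}
for a fixed orientation-dependent sign $\epsilon$, the zeroth-order terms dropping out because $\overline{\mp 3i/2}=\pm 3i/2$. Since $\gamma(\nu)$ commutes with $i\gamma(\nu)$, it preserves the orthogonal splitting $\mathbb S\Sigma=P_+\mathbb S\Sigma\oplus P_-\mathbb S\Sigma$, so the boundary term above vanishes for every $\psi$ with $P_\pm(\psi|_\Sigma)=0$ precisely when $P_\mp(\varphi|_\Sigma)=0$. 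Thus the adjoint of $\mathcal B_\pm$ is the Fredholm operator attached to $(\widetilde D^\mp,P_\mp)$, whose kernel is trivial by the injectivity argument above with the roles of $+$ and $-$ interchanged. Hence $\operatorname{coker}\mathcal B_\pm=0$, so $\mathcal B_\pm$ is an isomorphism, and the proposition follows.

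The step I expect to demand the most care in a complete write-up is the adjoint identification --- fixing the orientation sign in the Green formula and tracking the $i\gamma(\nu)$-eigenbundle decomposition so that the adjoint problem genuinely is $(\widetilde D^\mp,P_\mp)$, together with the (classical) verification that $P_\pm$ is an elliptic boundary condition --- although all of this is routine. The conceptual heart of the proof is simply that the hypotheses $R_g\ge-6$ and $H>-2$ force the bulk and boundary terms in (\ref{parts3}) to carry opposite signs, which simultaneously yields uniqueness and, through the adjoint, existence.
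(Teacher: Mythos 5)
Your argument follows essentially the same approach as the paper's proof in Appendix A: both exploit Fredholm theory for the MIT-bag boundary realization, use the integrated Lichnerowicz identity (\ref{parts3}) to show triviality of the kernel, and identify the $L^2$-adjoint as the opposite-sign realization $(\widetilde D^\mp,P_\mp)$ to conclude triviality of the cokernel. The only substantive difference is organizational. The paper isolates the kernel step as an alternative under the weaker hypothesis $H\geq -2$ (Lemma A.2: either the homogeneous problem has only the trivial solution, or there is a nontrivial imaginary Killing spinor on $\Omega$ with $H\equiv -2$ along $\Sigma$, ruled out by the nowhere-vanishing of imaginary Killing spinors), and then invokes $H>-2$ at the very end to discard the second branch. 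You instead feed $H>-2$ into the Lichnerowicz identity from the outset to force both sides of (\ref{parts3}) to vanish, obtaining $\psi\equiv 0$ on $\Sigma$ and $\widetilde\nabla^\pm\psi\equiv 0$ on $\Omega$, and then conclude $\psi\equiv 0$ by parallel transport. Both routes are correct; the paper's packaging makes the borderline case $H\equiv-2$ explicit, while yours is a little more direct. Your computation of the boundary integrand and the adjoint identification via the Green formula for $D$ (with the zeroth-order terms $\mp\tfrac{3i}{2}$ cancelling against $\pm\tfrac{3i}{2}$) also match the content of the paper's Lemma A.1.
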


A proof of this proposition can be found in A. Rold\'an's doctoral thesis \cite{R}.
For convenience we reproduce the argument in the Appendix.

We now show how this theory yields a nice inequality for arbitrary spinors on $\Sigma$ involving the mean curvature. This is a kind of holographic principle for imaginary Killing spinors and is obviously inspired on  \cite[Proposition 9]{HM}; see also \cite{HMRa}.

\begin{proposition}\label{estimat}
Let $(\Omega,g)$ be a compact oriented (and hence spin)  $3$-manifold with boundary $\Sigma$. Assume that $R_g\geq -6$ and $H>-2$. Then there holds
\begin{equation}\label{estimateq}
\int_\Sigma\left(\frac{|D^\Sigma\phi|^2}{H+2}-\frac{H-2}{4}|\phi|^2\right)d\Sigma\geq 0,
\end{equation}
for any $\phi\in\Gamma(\mathbb S\Sigma)$. Moreover, equality holds if and only if there exist imaginary Killing spinors $\psi^\pm\in\Gamma(\mathbb S\Omega)$ such that $\psi^\pm_\pm=\phi_\pm$ along $\Sigma$.
\end{proposition}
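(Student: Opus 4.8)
The plan is to follow the Hijazi--Montiel--Raulot template \cite{HM} \cite{HMRa}: I would turn the boundary inequality (\ref{estimateq}) into a statement about spinors globally defined on $\Omega$, apply the Reilly-type estimate of Proposition \ref{montneg}, and then absorb the uncontrolled boundary remainder by a weighted Young inequality. Concretely, given $\phi\in\Gamma(\mathbb S\Sigma)$, I would first invoke Proposition \ref{bvpmr} (legitimate since $R_g\ge-6$ and $H>-2$) to produce Killing-harmonic spinors $\psi^\pm\in\Gamma(\mathbb S\Omega)$ with $\widetilde D^\pm\psi^\pm=0$ on $\Omega$ and $\psi^\pm_\pm=\phi_\pm$ along $\Sigma$. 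Writing $\varphi^\pm=\psi^\pm|_\Sigma$, one then has $\varphi^+_+=\phi_+$ and $\varphi^-_-=\phi_-$, while the complementary pieces $\varphi^+_-$, $\varphi^-_+$ are determined by the solutions but a priori uncontrolled.

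Next I would apply Proposition \ref{montneg} to $\psi^\pm$; since $\widetilde D^\pm\psi^\pm=0$, the bulk term in (\ref{reillyhyp2}) drops out and one is left with $\int_\Sigma(\langle\widetilde D^\Sigma_\pm\varphi^\pm,\varphi^\pm\rangle-\tfrac H2|\varphi^\pm|^2)\,d\Sigma\ge 0$. Expanding the boundary integrand via $\widetilde D^\Sigma_\pm=D^\Sigma\pm i\gamma(\nu)$, the eigenbundle identity $\langle i\gamma(\nu)\varphi,\varphi\rangle=|\varphi_+|^2-|\varphi_-|^2$, and $\int_\Sigma\langle D^\Sigma\varphi,\varphi\rangle\,d\Sigma=2\,\mathrm{Re}\int_\Sigma\langle D^\Sigma\varphi_+,\varphi_-\rangle\,d\Sigma$ (from (\ref{cancel1}) and formal self-adjointness of $D^\Sigma$ on the closed surface $\Sigma$, all the integrals in question being real), the $+$ case reduces to
\[
0 \le 2\,\mathrm{Re}\int_\Sigma\langle D^\Sigma\phi_+,\varphi^+_-\rangle\,d\Sigma+\int_\Sigma\Big(1-\tfrac H2\Big)|\phi_+|^2\,d\Sigma-\int_\Sigma\Big(1+\tfrac H2\Big)|\varphi^+_-|^2\,d\Sigma,
\]
and, after transferring $D^\Sigma$ onto $\phi_-$ by self-adjointness, the $-$ case gives the twin inequality with $(\phi_+,\varphi^+_-)$ replaced by $(\phi_-,\varphi^-_+)$.

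The decisive step --- and the one place where a genuine choice is involved --- is the weighted Young inequality. Since $H>-2$, the weight $c:=\tfrac{H+2}{2}=1+\tfrac H2$ is pointwise positive, so $2\,\mathrm{Re}\langle D^\Sigma\phi_\pm,\varphi^\pm_\mp\rangle\le c^{-1}|D^\Sigma\phi_\pm|^2+c\,|\varphi^\pm_\mp|^2$; plugging this into the two inequalities above, the terms $\int_\Sigma(1+\tfrac H2)|\varphi^\pm_\mp|^2$ cancel \emph{exactly} --- which is precisely why $c$ must equal $(H+2)/2$ --- and one is left with
\[
0 \le \int_\Sigma\Big(\tfrac{2}{H+2}|D^\Sigma\phi_\pm|^2+\big(1-\tfrac H2\big)|\phi_\pm|^2\Big)\,d\Sigma = 2\int_\Sigma\Big(\tfrac{|D^\Sigma\phi_\pm|^2}{H+2}-\tfrac{H-2}{4}|\phi_\pm|^2\Big)\,d\Sigma.
\]
Adding the $+$ and $-$ versions and using $|D^\Sigma\phi|^2=|D^\Sigma\phi_+|^2+|D^\Sigma\phi_-|^2$ (by (\ref{curc2})) and $|\phi|^2=|\phi_+|^2+|\phi_-|^2$ then gives (\ref{estimateq}). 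I do not expect a deep obstacle here once Propositions \ref{bvpmr} and \ref{montneg} are in hand; the only non-mechanical insight is realizing that the uncontrolled Dirichlet-to-Neumann remainder $\int_\Sigma|\varphi^\pm_\mp|^2$ can be eliminated only by the weight $(H+2)/2$.

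For the equality case I would argue as follows. Equality in (\ref{estimateq}) forces equality in both of the displays above, hence: (i) equality in Proposition \ref{montneg} for each $\psi^\pm$, i.e. $\psi^\pm$ is a twistor spinor and $R_g\equiv-6$; and (ii) equality in Young, i.e. $D^\Sigma\phi_\pm=\tfrac{H+2}{2}\varphi^\pm_\mp$ on $\Sigma$. A twistor spinor with $\widetilde D^\pm\psi^\pm=0$ satisfies $D\psi^\pm=\pm\tfrac{3i}{2}\psi^\pm$ by (\ref{killdirexp}), hence $\widetilde\nabla^\pm_X\psi^\pm=\nabla_X\psi^\pm\pm\tfrac i2\gamma(X)\psi^\pm=0$, so $\psi^\pm$ is an imaginary Killing spinor, with $\psi^\pm_\pm=\phi_\pm$ by construction. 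Conversely, given imaginary Killing spinors $\psi^\pm$ with $\psi^\pm_\pm=\phi_\pm$: they are automatically twistor and Killing-harmonic, so by uniqueness in Proposition \ref{bvpmr} they are the spinors above; the identity $\widetilde\nabla^\pm\psi^\pm=0=\widetilde D^\pm\psi^\pm$ makes the Lichnerowicz integrand vanish pointwise and, together with a unique-continuation argument when $\phi\not\equiv 0$, forces $R_g\equiv-6$; and a one-line computation from (\ref{par}) gives $D^\Sigma\psi^\pm|_\Sigma=(\tfrac H2\mp i\gamma(\nu))\psi^\pm|_\Sigma$, whose $P_\mp$-component is exactly $D^\Sigma\phi_\pm=\tfrac{H+2}{2}\varphi^\pm_\mp$. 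So all inequalities become equalities, completing the characterization.
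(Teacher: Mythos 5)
Your proof follows the paper's argument essentially verbatim: solve the two MIT-bag boundary value problems via Proposition \ref{bvpmr}, apply the Reilly-type estimate of Proposition \ref{montneg} with vanishing bulk term, split the boundary integrand via (\ref{cancel1}) and the $i\gamma(\nu)$-eigenbundle identity, absorb the uncontrolled trace pieces $\varphi^\pm_\mp$ by the weighted Cauchy--Schwarz inequality with weight $(H+2)/2$ (the paper writes this as completing a square, which is the same thing), then add the $+$ and $-$ inequalities and invoke (\ref{curc2}). Your equality discussion is the same as the paper's for the forward direction (equality forces equality in Proposition \ref{montneg}, so $\psi^\pm$ is both twistor and Killing-harmonic, hence imaginary Killing); you additionally verify the converse direction, which the paper leaves implicit, but this does not change the overall approach.
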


\begin{proof}
Given  $\phi\in\Gamma(\mathbb S\Omega|_{\Sigma})$, we take $\psi^+\in\Gamma(\mathbb S\Omega)$ as in  Proposition \ref{bvpmr}.
It follows from (\ref{reillyhyp2}) that
\begin{equation}\label{preli}
\int_\Sigma\left(\langle \widetilde D^\Sigma_- \psi^+,\psi^+\rangle-\frac{H}{2}|\psi^+|^2\right)d\Sigma\geq 0.
\end{equation}
Now,
\[
\langle \widetilde D^\Sigma\psi^+,\psi^+\rangle=\langle D^\Sigma\psi^+,\psi^+\rangle+|\psi^+_+|^2-|\psi^+_-|^2,
\]
so that (\ref{curc}) gives
\begin{equation}\label{first}
\int_\Sigma\left(2 \langle D^\Sigma\psi^+_+,\psi^+_-\rangle-\left(\frac{H}{2}-1\right)|\psi^+_+|^2-\left(\frac{H}{2}+1\right)
|\psi^+_-|^2\right)d\Sigma\geq 0.
\end{equation}
Since $H>-2$ we can use the elementary estimate
\begin{eqnarray*}
0 & \leq & \left|\frac{D^\Sigma\psi^+_+}{\sqrt{\frac{H}{2}+1}}-\sqrt{\frac{H}{2}+1}\psi^+_-\right|^2\\
 & = & \frac{|D^\Sigma\psi^+_+|^2}{\frac{H}{2}+1}-2\langle D^\Sigma\psi^+_+,\psi^+_-\rangle+\left(\frac{H}{2}+1\right)|\psi^+_-|^2,
\end{eqnarray*}
and leading this to (\ref{first})
we obtain
\begin{equation}\label{first2}
\int_\Sigma\left(\frac{|D^\Sigma\psi^+_+|^2}{H+2}-\frac{H-2}{4}|\psi^+_+|^2\right)d\Sigma\geq 0.
\end{equation}

On the other hand, if $\phi\in\Gamma(\mathbb S\Omega|_\Sigma)$ is the {\em same} spinor as above, we take $\psi^-\in\Gamma(\mathbb S\Omega)$ as in Proposition \ref{bvpmr}.
It follows from (\ref{reillyhyp2}) that
\[
\int_\Sigma\left(\langle \tilde D^\Sigma_+ \psi^-,\psi^-\rangle-\frac{H}{2}|\psi^-|^2\right)d\Sigma\geq 0.
\]
We thus get the analogue of (\ref{first}), namely,
\begin{equation}\label{second}
\int_\Sigma\left(2 \langle D^\Sigma\psi^-_+,\psi^-_-\rangle-\left(\frac{H}{2}-1\right)|\psi^-_-|^2-
\left(\frac{H}{2}+1\right)|\psi^-_+|^2\right)d\Sigma\geq 0.
\end{equation}
We can again estimate
\begin{eqnarray*}
0 & \leq & \left|\frac{D^\Sigma\psi^-_-}{\sqrt{\frac{H}{2}+1}}-\sqrt{\frac{H}{2}+1}\psi^-_+\right|^2\\
 & = & \frac{|D^\Sigma\psi^-_-|^2}{\frac{H}{2}+1}-2\langle D^\Sigma\psi^-_-,\psi^-_+\rangle+\left(\frac{H}{2}+1\right)|\psi^-_+|^2,
\end{eqnarray*}
and leading this to (\ref{second})
we obtain
\begin{equation}\label{second2}
\int_\Sigma\left(\frac{|D^\Sigma\psi^-_-|^2}{H+2}-\frac{H-2}{4}|\psi^-_-|^2\right)d\Sigma\geq 0.
\end{equation}
Adding (\ref{first2}) to (\ref{second2}) and using (\ref{curc2}) and the boundary conditions, (\ref{estimateq}) follows.

If equality holds in (\ref{first2}) then it also holds in (\ref{preli}). By Proposition \ref{montneg}, the Killing-harmonic spinor $\psi^+$ is a twistor spinor as well. Thus, $\psi^+$ is an imaginary Killing spinor, as desired. Since an entirely similar argument handles the case of equality in (\ref{second2}), this completes the proof of the proposition.
\end{proof}

\section{The proofs of Theorems \ref{mmain1}, \ref{mmain2} and \ref{mmain3}}\label{proofmain1}

For simplicity of notation in this section we write $\widetilde{\nabla}=\widetilde\nabla^+$ for the Killing connection and consistently drop the plus sign in the rest of the notation.  

We start 
the proof of 
Theorems \ref{mmain1}, \ref{mmain2} and \ref{mmain3}
by recalling
that $\hat\phi\in\Gamma(\mathbb S\mathbb H^3)$ is called an {\em imaginary Killing spinor} if it satisfies
\begin{equation}\label{kills}
\nabla^0_X\hat\phi+\frac{i}{2}\gamma^0(X)\hat\phi=0,\quad X\in\Gamma(T\mathbb H^3),
\end{equation}
where the  label $0$  refers to invariants associated to $\mathbb H^3$ and its spin bundle.

\begin{lemma}\label{baum}
If $\hat\phi\in\Gamma(\mathbb S\mathbb H^3)$ is an imaginary Killing spinor then
\begin{equation}\label{baum1}
{\rm Hess}^0|\hat\phi|^2=|\hat\phi|^2g_0
\end{equation}
and
\begin{equation}\label{baum2}
|\nabla^0|\hat\phi|^2|^2=-\sum_{i=1}^3\langle \gamma^0(\tilde e_i)\hat\phi,\hat\phi\rangle^2,
\end{equation}
where $\{\tilde e_i\}_{i=1}^3$ is a local orthonormal tangent frame. Moreover, if $\gamma:[0,+\infty)\to\mathbb H^3$ is  a normal geodesic then $u=|\hat\phi|^2\circ\gamma$ is given by
\begin{equation}\label{baum3}
u(t)=Ae^{t}+Be^{-t}, \quad A,B\in\mathbb R.
\end{equation}

\begin{proof}
If $X,Y\in\Gamma(T\mathbb H^3)$ then direct computations give
\begin{equation}\label{baum4}
X|\hat\phi|^2=-i\langle\gamma^0(X)\hat\phi,\hat\phi\rangle,
\end{equation}
and
\begin{equation}\label{baum5}
XY|\hat\phi|^2=-i\langle\gamma^0(\nabla^0_XY)\hat\phi,\hat\phi\rangle+\langle X,Y\rangle|\hat\phi|^2.
\end{equation}
From this, (\ref{baum1}) and (\ref{baum2}) follow easily. It also follows from (\ref{baum5}) that $u''=u$, which proves (\ref{baum3}).
\end{proof}
\end{lemma}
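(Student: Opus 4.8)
The plan is to deduce all three assertions from two pointwise identities for the scalar function $|\hat\phi|^2$, obtained by differentiating and repeatedly feeding in the Killing equation (\ref{kills}), together with the Clifford relation $\gamma^0(X)\gamma^0(Y)+\gamma^0(Y)\gamma^0(X)=-2\langle X,Y\rangle$ and the skew-Hermiticity of $\gamma^0(X)$ on $\mathbb S\mathbb H^3$ (so that $\langle\gamma^0(X)\hat\phi,\hat\phi\rangle$ is purely imaginary).

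First I would establish (\ref{baum4}). Since $\nabla^0$ is metric, $X|\hat\phi|^2=2\,{\rm Re}\langle\nabla^0_X\hat\phi,\hat\phi\rangle$; substituting $\nabla^0_X\hat\phi=-\tfrac{i}{2}\gamma^0(X)\hat\phi$ from (\ref{kills}) and noting that $-i\langle\gamma^0(X)\hat\phi,\hat\phi\rangle$ is already real gives $X|\hat\phi|^2=-i\langle\gamma^0(X)\hat\phi,\hat\phi\rangle$. I would then differentiate once more: writing $XY|\hat\phi|^2=-i\,X\langle\gamma^0(Y)\hat\phi,\hat\phi\rangle$, expanding by the metric property together with the parallelism of $\gamma^0$, and inserting (\ref{kills}) into the two terms where $\nabla^0\hat\phi$ occurs, the two resulting scalar products combine through the Clifford anticommutator into $\langle X,Y\rangle|\hat\phi|^2$; a short cancellation yields (\ref{baum5}).

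With (\ref{baum5}) in hand, (\ref{baum1}) is immediate: ${\rm Hess}^0|\hat\phi|^2(X,Y)=XY|\hat\phi|^2-(\nabla^0_XY)|\hat\phi|^2$, and the term $-i\langle\gamma^0(\nabla^0_XY)\hat\phi,\hat\phi\rangle$ in (\ref{baum5}) is exactly $(\nabla^0_XY)|\hat\phi|^2$ by (\ref{baum4}), leaving $\langle X,Y\rangle|\hat\phi|^2$. Identity (\ref{baum2}) follows by squaring (\ref{baum4}), summing over an orthonormal frame $\{\tilde e_i\}$ and using $(-i)^2=-1$. Finally, for (\ref{baum3}) I would evaluate (\ref{baum1}) along a unit-speed geodesic $\gamma$: since $\nabla^0_{\dot\gamma}\dot\gamma=0$, the function $u=|\hat\phi|^2\circ\gamma$ satisfies $u''={\rm Hess}^0|\hat\phi|^2(\dot\gamma,\dot\gamma)=|\dot\gamma|^2u=u$, whose general solution is $u(t)=Ae^{t}+Be^{-t}$.

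There is no genuine obstacle here; the only point demanding care is the sign bookkeeping in the second differentiation — in particular tracking the complex conjugate produced when $\nabla^0\hat\phi$ sits in the second slot of the Hermitian pairing, and applying skew-Hermiticity of $\gamma^0(X)$ before invoking the Clifford anticommutation relation, so that the $\gamma^0$-dependent contributions collapse exactly to the metric term $\langle X,Y\rangle|\hat\phi|^2$.
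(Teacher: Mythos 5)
Your proposal is correct and follows essentially the same route as the paper's proof, which simply records (\ref{baum4})--(\ref{baum5}) as ``direct computations'' and leaves the deduction of (\ref{baum1})--(\ref{baum3}) to the reader; you have filled in exactly those computational details (skew-Hermiticity of $\gamma^0(X)$, parallelism of Clifford multiplication, the Clifford anticommutator, and the Hessian formula along a geodesic), and the sign bookkeeping you flag all works out.
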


For further reference we note that  if $h_\epsilon=g|_{\Sigma_\epsilon}$ then (\ref{expan}) and (\ref{asymexpcoll}) give
\begin{equation}\label{meteps}
h_\epsilon=\epsilon^{-2}h_0+O(1),
\end{equation}
so that
\begin{equation}\label{areagrowth}
d\Sigma_\epsilon=(\epsilon^{-2}+O(1))d\mu_{h_0}.
\end{equation}
Moreover, 
(\ref{baum3}) implies
\begin{equation}\label{partic}
|\hat\phi|^2= O(\epsilon^{-1}),
\end{equation}

We also need an explicit description of imaginary Killing spinors on $\mathbb H^3$. 
With respect to a suitable trivialization $\mathbb S\mathbb H^3=\mathbb H^3\times\mathbb C^2$, the expression
\[
\phi^{(z)}=
\left(
\begin{array}{c}
\left(z_1e^{\frac{i}{2}\vartheta}\cos\frac{\theta}{2}+
z_2e^{-\frac{i}{2}\vartheta}\sin\frac{\theta}{2}\right)e^{\frac{r}{2}}\\
-\left(z_1e^{\frac{i}{2}\vartheta}\sin\frac{\theta}{2}-
z_2e^{-\frac{i}{2}\vartheta}\cos\frac{\theta}{2}\right)e^{-\frac{r}{2}}
\end{array}
\right), \quad z=(z_1,z_2)\in\mathbb C^2,
\]
defines an imaginary Killing spinors in $\mathbb H^3$, where
$X=(r,\theta,\vartheta)$ is the position vector expressed in standard polar coordinates \cite{WY1} \cite{Z}. A straightforward computation gives
\begin{equation}\label{gives}
|\phi^{(z)}|^2=-\langle\langle X,\overline \eta(z)\rangle\rangle,
\end{equation}
where
\[
\overline \eta(z)=(-(|z_1|^2-|z_2|^2), -(z_1\overline z_2+\overline z_1z_2),-i(z_1\overline z_2-\overline z_1 z_2), |z_1|^2+|z_2|^2).
\]
It is easy to see that $\overline \eta$ maps $\mathbb S^{3}=\{(z\in\mathbb C^2;|z|=1\}$ onto $\mathbb S^2=\Cc^+\cap \{t=1\}$. In fact, $\overline \eta|_{\mathbb S^3}$ is just the Hopf map.
Thus, $\overline \eta$  maps $\mathbb C^2$ onto $\Cc^+$.

With these preliminaries at hand, we finally start the proof of Theorem \ref{mmain3}.
For $\eta\in \Cc^+$  we choose $z\in\mathbb C^2$ so that $\overline \eta(z)=\eta$. The usual adaptation of Witten's method \cite{AD} \cite{CH} \cite{Wa} \cite{WY1} leads to the existence of a non-trivial Killing-harmonic spinor $\psi^{(\eta)}\in\Gamma(\mathbb SM)$, $\widetilde D^+\psi^{(\eta)}=0$, which asymptotes the imaginary Killing spinor $\phi^{(z)}\in\Gamma(\mathbb SH^3)$ after a suitable identification of the two infinities.
More precisely, in a neighborhood $U$ of the conformal infinity we consider the background hyperbolic metric
\[
g_0=\sinh^{-2}\rho(d\rho^2+h_0),
\]
and the gauge transformation $\Bc$  given by
\[
g(\Bc X,\Bc Y)=g_0(X,Y), \quad g(\Bc X,Y)=g(X,\Bc Y), \quad X,Y\in\Gamma(TU).
\]
We find from (\ref{remainder}) that
\begin{equation}\label{gauge}
|\Bc-I|+|\nabla^0 \Bc|=O(\epsilon^3).
\end{equation}
Also, $\Bc$ defines a fiberwise isometry  between the spin bundles on $U$
endowed with the metric structures coming from $g_0$ and $g$. Thus, if $f$ is a cut-off function on $M$ with $f\equiv 1$ in a neighborhood of infinity and $\phi$ is a spinor on $U$, $f\Bc \phi$ defines a spinor on $M$, denoted by $\phi_*$.
If we apply this construction to $\phi=\phi^{(z)}$ then a well-known computation
gives
\[
|\widetilde\nabla\phi_*^{(z)}|\leq C\left(|\Bc^{-1}||\Bc-I|+|\nabla^0 \Bc|\right)|\phi_*^{(z)}|=O(\epsilon^{5/2}),
\]
so that 
$\widetilde \nabla\phi_*^{(z)}\in L^2(\mathbb SM)$ and hence  $\widetilde D\phi_*^{(z)}\in L^2(\mathbb SM)$.
A standard argument then implies the existence of a spinor $\xi\in H^1(\mathbb S M)$ satisfying
$
\widetilde D\xi=-\widetilde D\phi_*^{(z)}
$.
Thus, $\psi^{(\eta)}=\phi_*^{(z)}+\xi$ is Killing-harmonic and asymptotes $\phi_*^{(z)}$ at infinity in the sense that $\psi^{(\eta)}-\phi_*^{(z)}\in L^2(\mathbb SM)$.
With this spinor at hand we may apply (\ref{parts3}) to obtain
\begin{eqnarray*}
\int_M\left(|\widetilde\nabla\psi^{(\eta)}|^2
        +\frac{R_g+6}{4}|\psi^{(\eta)}|^2\right)dM
    & {=} & \lim_{\epsilon\to 0}
    \int_{\Sigma_\epsilon}\left(\langle \widetilde D^{\Sigma_\epsilon}\psi,\psi\rangle-\frac{H}{2}|\psi|^2\right)d\Sigma_\epsilon\\
&=& \lim_{\epsilon\to 0}\int_{\Sigma_\epsilon}\left(\langle \widetilde D^{\Sigma_\epsilon}\phi_*^{(z)},\phi_*^{(z)}\rangle-\frac{H}{2}|\phi_*^{(z)}|^2\right)d\Sigma_\epsilon\\
& = & \lim_{\epsilon\to 0}\int_{\Sigma_\epsilon}\left(\langle \widetilde D^{\Sigma_\epsilon}\phi^{(z)}_*,\phi^{(z)}_*\rangle-\frac{H_0}{2}|\phi^{(z)}_*|^2\right)d\Sigma_\epsilon\\
& & \quad 
+\lim_{\epsilon\to 0}\frac{1}{2}\int_{\Sigma_\epsilon}\left(H_0-H\right)|\phi^{(z)}_*|^2d\Sigma_\epsilon, 
\end{eqnarray*}
where in the second step we used that standard cancellations imply that in the limit the boundary term only captures the contribution coming from the terms which are quadratic in $\phi^{(z)}_*$.

Recalling that
the label $0$ refers to invariants of $g_0$ and its spin bundle we note that  $\widetilde\Wc^0=-(\widetilde\nabla^0_{\nu^0}+
\gamma^0(\nu^0)\widetilde D_0$), where $\nu^0$ is the inward point unit normal to $\Sigma_\epsilon\subset\mathbb H^3$,
is self-adjoint (in the $L^2$ sense) when acting on spinors restricted to $\Sigma_\epsilon$. Thus, since
$\phi^{(z)}_*=\Bc\phi^{(z)}=\phi^{(z)}+\Kc\phi^{(z)}$, $|\Kc|=O(\epsilon^3)$, we have 
\begin{eqnarray*}
\int_{\Sigma_\epsilon}\left(\langle \widetilde D^{\Sigma_\epsilon}\phi^{(z)}_*,\phi^{(z)}_*\rangle-\frac{H_0}{2}|\phi^{(z)}_*|^2\right)d\Sigma_\epsilon
& = & {\rm Re}\int_{\Sigma_\epsilon}\left\langle
\widetilde\Wc^0\phi^{(z)}_*,\phi^{(z)}_*
\right\rangle d\Sigma_\epsilon\\
& = &  {\rm Re}\int_{\Sigma_\epsilon}\left\langle
\widetilde\Wc^0\Bc\phi^{(z)},\Bc\phi^{(z)}
\right\rangle d\Sigma_\epsilon\\
& = & {\rm Re}\int_{\Sigma_\epsilon}\left\langle
\Bc\phi^{(z)},\widetilde\Wc^0\phi^{(z)}
\right\rangle d\Sigma_\epsilon\\
&  & \quad + {\rm Re}\int_{\Sigma_\epsilon}\left\langle
\widetilde\Wc^0\Bc\phi^{(z)},\Kc\phi^{(z)}
\right\rangle d\Sigma_\epsilon.
\end{eqnarray*}
But $\widetilde\Wc^0\phi^{(z)}=0$ while by (\ref{gauge})  we have 
$|\widetilde \Wc^0\Bc|=O(\epsilon^3)$, so 
we find that the above integral is $O(\epsilon^3)$,
which gives
\[
\int_M\left(|\widetilde \nabla\psi^{(\eta)}|^2+
\frac{R_g+6}{4}|\psi^{(\eta)}|^2\right)dM
    =
    \lim_{\epsilon\to 0}\frac{1}{2}\int_{\Sigma_\epsilon}\left(H_0-H\right)|\phi^{(z)}|^2 d\Sigma_\epsilon,
\]
where we used that $|\phi^{(z)}_*|=|\phi^{(z)}|$.
If we now appeal to 
(\ref{gives}) this proves (\ref{newproof2}) and completes the proof of Theorem \ref{mmain3}.

It is obvious that (\ref{newproof2}) implies the inequality (\ref{limit}) in Theorem \ref{mmain1} if $R_g\geq -6$. As for the rigidity statement, if the equality holds for some $\eta$
then from (\ref{newproof2}) we get
\[
\int_M|\widetilde\nabla \psi^{(\eta)}|^2dM\leq 0,
\]
that is, $\psi^{(\eta)}$ is an imaginary Killing spinor. A well-known result by Baum \cite{BFGK} \cite{AD} then implies that $(M,g)$ is isometric to $(\mathbb H^3,g_0)$, as desired.

\begin{remark}
\label{lamb}
{\rm 
If we assume that $(M,g)$ carries a compact inner boundary $\Gamma$ then instead of (\ref{newproof2}) we now have 
\begin{eqnarray*}
\int_M\left(|\widetilde \nabla^+\psi^{(\eta)}|^2+\frac{R_g+6}{4}|\psi^{(\eta)}|^2\right)dM
& = & -4\pi\langle\langle \lim_{\epsilon\to 0}\mathfrak m_{BY}(\Sigma_\epsilon,X_\epsilon),\eta\rangle\rangle \\
  & & \quad +\int_{\Gamma}\left\langle \left(\widetilde D^\Gamma_+-\frac{\Hc}{2}\right)\psi^{(\eta)},\psi^{(\eta)}\right\rangle d\Gamma,
\end{eqnarray*}
where 
$\Hc$ is the mean curvature of $\Gamma$ with respect to $\nu_\Gamma$, the unit normal pointing toward infinity, and 
$\widetilde D_+^{\Gamma}=D^\Gamma+i\gamma(\nu_\Gamma)$.
By \cite{BC} we may choose $\psi^{(\eta)}$ so that it satisfies the MIT bag boundary condition $i\gamma(\nu_\Gamma)\psi^{(\eta)}=-\psi^{(\eta)}$ along $\Gamma$. Hence, by (\ref{cancell}),
\begin{eqnarray*}
\int_M\left(|\widetilde \nabla^+\psi^{(\eta)}|^2+\frac{R_g+6}{4}|\psi^{(\eta)}|^2\right)dM
& = & -4\pi\langle\langle \lim_{\epsilon\to 0}\mathfrak m_{BY}(\Sigma_\epsilon,X_\epsilon),\eta\rangle\rangle \\
  & & \quad -\int_{\Gamma} \left(\frac{\Hc+2}{2}\right)|\psi^{(\eta)}|^2d\Gamma.
\end{eqnarray*}
Thus, if $R_g\geq -6$ and $\Hc\geq -2$ we see that 
\[
\langle\langle \lim_{\epsilon\to 0}\mathfrak m_{BY}(\Sigma_\epsilon,X_\epsilon),\eta\rangle\rangle\leq 0, \quad \eta\in\Cc^+,
\]
just as in Theorem \ref{mmain1}. Moreover, if the equality holds for some $\eta$ then $\psi^{(\eta)}$ is an imaginary Killing spinor and hence $(M,g)$ is Einstein, ${\rm Ric}_g=-2g$. Moreover, there holds $\Hc\equiv -2$ so if we use that by (\ref{asymp1}) the mean curvature of the coordinate spheres $\Sigma_\epsilon$, computed with respect to the unit normal pointing toward infinity, converges to $-2$ as $\epsilon\to 0$ and argue as in the proof of \cite[Theorem 4.7]{CH} we easily reach a contradiction. This shows that under the conditions of Theorem \ref{mmain1} and in the presence of an inner boundary $\Gamma$ as above we always have that $\lim_{\epsilon\to 0}\mathfrak m_{BY}(\Sigma_\epsilon,X_\epsilon)$ is time-like and future directed. In other words, this type of {\em trapped} inner boundary which, under suitable global conditions, foretells the existence of a black hole region in the Cauchy development of $(M,g)$, effectively contributes to force this vector to be  time-like. This clearly suggests that a Penrose-type inequality might hold in this setting. Notice  also that a similar statement holds in the context of Theorem \ref{main1}. We thank L. Ambrozio for enlightening conversations regarding this interesting sharpening of our results.
}
\end{remark}

\section{The proofs of Theorems \ref{main1}, \ref{main2} and \ref{main3}}\label{proofrest}

In this section we present the proofs of Theorems \ref{main1}, \ref{main2} and {\ref{main3}}. As remarked in the Introduction, these results  only depend on the theory of boundary value problems for Dirac operators in compact domains as described in Section \ref{dirac} and Appendix \ref{appendixA}. 

\begin{proposition}\label{mainhyp}
Let $(\Omega,g)$ be a compact $3$-manifold with boundary a sphere $\Sigma$ satisfying $K>-1$ and $H>-2$.
Then there holds
\begin{equation}\label{mainhypineq}
\int_{\Sigma} \frac{H_0^2-H^2}{H+2}|\hat\phi|^2 d\Sigma+4\int_{\Sigma}\frac{\Delta_{\Sigma}|\hat\phi|^2}{H+2}
d\Sigma \geq 0,
\end{equation}
for any imaginary Killing spinor $\hat\phi\in\Gamma(\mathbb S\mathbb H^3)$, where $H_0$ is the mean curvature of the embedding $\Sigma\subset \mathbb H^3$. Moreover, equality holds if and only if the shape operators of the embeddings $\Sigma\subset\Omega$ and $\Sigma\subset\mathbb H^3$ coincide.
\end{proposition}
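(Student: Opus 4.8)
The plan is to apply the holographic estimate of Proposition~\ref{estimat} to the restriction to $\Sigma$ of an imaginary Killing spinor of the model space $\mathbb H^3$, and then to rewrite its zeroth-order boundary term so that it reproduces the left-hand side of (\ref{mainhypineq}). Since $K>-1$, Pogorelov's theorem \cite{P} gives an isometric embedding $\Sigma\hookrightarrow\mathbb H^3$, unique up to an isometry of $\mathbb H^3$; write $\nu^0$ for its inward unit normal and $H_0=\operatorname{tr}(-\nabla^0\nu^0)$ for the associated mean curvature, so that the sign conventions match those of Section~\ref{dirac}. As $\Sigma\cong\mathbb S^2$ carries a unique spin structure, the identification $\mathbb S\Omega|_\Sigma\cong\mathbb S\Sigma$ of Section~\ref{dirac} and the analogous identification $\mathbb S\mathbb H^3|_\Sigma\cong\mathbb S\Sigma$ realize the same Dirac bundle over $\Sigma$; in particular the intrinsic operator $D^\Sigma$, the self-adjoint involution $i\gamma(\nu)=i\gamma^0(\nu^0)$ and the projections $P_\pm$ are unambiguous, and an imaginary Killing spinor $\hat\phi\in\Gamma(\mathbb S\mathbb H^3)$ restricts to a well-defined $\hat\phi|_\Sigma\in\Gamma(\mathbb S\Sigma)$, which is the spinor fed into Proposition~\ref{estimat}.

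The technical core is the following identity. Applying (\ref{par}) on $\mathbb H^3$ and using $\nabla^0_X\hat\phi=-\tfrac{i}{2}\gamma^0(X)\hat\phi$ --- whence $D^0\hat\phi=\tfrac{3i}{2}\hat\phi$ and $\nabla^0_{\nu^0}\hat\phi=-\tfrac{i}{2}\gamma^0(\nu^0)\hat\phi$ --- one gets $D^\Sigma\hat\phi=\tfrac{H_0}{2}\hat\phi-i\gamma^0(\nu^0)\hat\phi$. Taking squared norms and using (\ref{baum4}), which gives $\operatorname{Re}\langle\hat\phi,i\gamma^0(\nu^0)\hat\phi\rangle=-\partial_{\nu^0}|\hat\phi|^2$, together with $|i\gamma^0(\nu^0)\hat\phi|^2=|\hat\phi|^2$, yields $|D^\Sigma\hat\phi|^2=(\tfrac{H_0^2}{4}+1)|\hat\phi|^2+H_0\,\partial_{\nu^0}|\hat\phi|^2$. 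To remove the normal derivative I decompose the ambient Laplacian of $u:=|\hat\phi|^2$ along $\Sigma$,
\[
\Delta^0 u=\Delta_\Sigma u+\operatorname{Hess}^0 u(\nu^0,\nu^0)-H_0\,\partial_{\nu^0}u,
\]
and invoke Lemma~\ref{baum}: since $\operatorname{Hess}^0 u=u\,g_0$ we have $\Delta^0 u=3u$ and $\operatorname{Hess}^0 u(\nu^0,\nu^0)=u$, so $H_0\,\partial_{\nu^0}u=\Delta_\Sigma u-2u$. Substituting then gives
\[
|D^\Sigma\hat\phi|^2=\frac{H_0^2-4}{4}\,|\hat\phi|^2+\Delta_\Sigma|\hat\phi|^2 .
\]

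Plugging this into Proposition~\ref{estimat} with $\phi=\hat\phi|_\Sigma$, and using $\dfrac{H_0^2-4}{4(H+2)}-\dfrac{H-2}{4}=\dfrac{H_0^2-H^2}{4(H+2)}$, the inequality (\ref{estimateq}) becomes exactly $\tfrac{1}{4}$ times (\ref{mainhypineq}). For the equality case one traces back the saturation conditions in Proposition~\ref{estimat}: equality forces the solutions $\psi^\pm$ of the boundary value problems of Proposition~\ref{bvpmr} with data $\hat\phi_\pm$ to be imaginary Killing spinors on $\Omega$ and, via Proposition~\ref{montneg}, $R_g\equiv-6$, so that $(\Omega,g)$ has constant sectional curvature $-1$. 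Restricting the $+$-Killing spinor $\psi^+$ to $\Sigma$ by means of (\ref{conn0}) produces a generalized Killing identity $\nabla^\Sigma_X\psi^+_+=-\tfrac{1}{2}\gamma^\Sigma((A+I)X)\psi^+_-$, and the same computation on $\mathbb H^3$ gives $\nabla^\Sigma_X\hat\phi_+=-\tfrac{1}{2}\gamma^\Sigma((A_0+I)X)\hat\phi_-$; since $\psi^+_+=\hat\phi_+$ along $\Sigma$ while the $P_-$-parts $\psi^+_-$ and $\hat\phi_-$ are proportional (compare the extrinsic Dirac equations), injectivity of $X\mapsto\gamma^\Sigma(X)\psi^+_-$ makes $A+I$ and $A_0+I$ proportional; a further comparison --- using $\psi^-$ as well, together with the Gauss equation and connectedness of $\Sigma$ --- pins the factor down to $1$, i.e.\ $A=A_0$, and the converse follows by reversing the argument.

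I expect the main obstacle to be the identity of the second paragraph: getting $|D^\Sigma\hat\phi|^2$ into precisely the form $\tfrac{H_0^2-4}{4}|\hat\phi|^2+\Delta_\Sigma|\hat\phi|^2$, with exactly these constants, relies on the special structure of imaginary Killing spinors on $\mathbb H^3$ (Lemma~\ref{baum}) and on doing the bookkeeping of normals, orientations and the two spin-bundle identifications consistently. The equality discussion is the other delicate point, since it requires comparing the restricted Killing data on $\Sigma$ rather than just the mean curvatures.
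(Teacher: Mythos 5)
Your proposal is correct and follows essentially the same route as the paper: restrict $\hat\phi$ to $\Sigma$, derive $D^\Sigma\hat\phi=\tfrac{H_0}{2}\hat\phi-i\gamma^0(\nu^0)\hat\phi$, convert the normal-derivative term via Lemma~\ref{baum} into the Minkowski-type identity $\Delta_\Sigma|\hat\phi|^2=2|\hat\phi|^2+H_0\nu^0(|\hat\phi|^2)$, and plug the resulting formula $|D^\Sigma\hat\phi|^2=(\tfrac{H_0^2}{4}-1)|\hat\phi|^2+\Delta_\Sigma|\hat\phi|^2$ into Proposition~\ref{estimat}. The paper simply omits the rigidity analysis by citing \cite[Theorem~2]{HM}, whereas you sketch it; your sketch is in the right spirit but is not a complete argument (the step pinning the proportionality factor to $1$ via the Gauss equation and connectedness would need to be carried out in detail, as in Hijazi--Montiel).
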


\begin{proof}
Since $\Sigma$ is topologically a sphere, it carries a {\em unique} spin structure. This allows us to take $\phi=\hat\phi$  in (\ref{estimateq}).
It follows from (\ref{form1}) and (\ref{form2}) that
\[
D^\Sigma\hat\phi=\frac{H_0}{2}\hat\phi-i\gamma^0(\nu^0)\hat\phi,
\]
which gives
\begin{eqnarray*}
|D^\Sigma \hat\phi|^2 & = & \left(\frac{H_0^2}{4}+1\right)|\hat\phi|^2-H_0\langle i\gamma^0(\nu^0)\hat\phi,\hat\phi\rangle \\
 & = & \left(\frac{H_0^2}{4}+1\right)|\hat\phi|^2+2H_0\langle \nabla^0_{\nu^0}\hat\phi,\hat\phi\rangle \\
  & = & \left(\frac{H_0^2}{4}+1\right)|\hat\phi|^2+H_0{\nu^0}(|\hat\phi|^2).
\end{eqnarray*}
Now, it follows easily from (\ref{baum1}) that the restriction of $|\hat\phi|^2$ to $\Sigma$ satisfies the
Minkowski-type identity
\[
\Delta_\Sigma|\hat\phi|^2=2|\hat\phi|^2+H_0\nu^0(|\hat\phi|^2),
\]
so we get
\begin{equation}\label{asin}
|D^\Sigma \hat\phi|^2  =  \left(\frac{H_0^2}{4}-1\right)|\hat\phi|^2 +\Delta_\Sigma|\hat\phi|^2,
\end{equation}
which proves (\ref{mainhypineq}). The rigidity statement follows essentially by the same argument as in the proof of \cite[Theorem 2]{HM}, so we omit it here.
\end{proof}

Under the conditions of Theorem \ref{main1}, the second integral in (\ref{mainhypineq}) vanishes so we obtain
\begin{equation}\label{mainhypineq2}
\int_{\Sigma} \frac{H_0^2-H^2}{H+2}|\hat\phi|^2 d\Sigma
\geq 0,
\end{equation}
with the equality occurring if and only if $\Omega\subset\mathbb H^3$ is a domain whose boundary has constant mean \ curvature, i.e. $\Omega$ is a round ball. 
Since, as remarked in the previous section, 
$\overline \eta$  maps $\mathbb C^2$ onto $\Cc^+$,  we can use (\ref{mainhypineq2}) with $\hat\phi=\phi^{(z)}$ and (\ref{gives})
to conclude the proof of Theorem \ref{main1}.

We now show how Theorems \ref{main2}
and \ref{main3} follow
from two auxiliary results  describing the asymptotic behavior of the terms in the left-hand side of (\ref{mainhypineq})
along a coordinate exhaustion of an asymptotically hyperbolic manifold.

\begin{proposition}\label{funclim0}
If $(\Omega_\epsilon,\Sigma_\epsilon)$ is a coordinate exhaustion of an asymptotically hyperbolic manifold then
\begin{equation}\label{funclim20}
\lim_{\epsilon\to 0}\int_{\Sigma_\epsilon}\frac{H_0^2-H^2}{H+2}|\hat\phi|^2d\Sigma_\epsilon=\lim_{\epsilon\to 0}\int_{\Sigma_\epsilon}(H_0-H)|\hat\phi|^2d\Sigma_\epsilon,
\end{equation}
for any imaginary Killing spinor $\hat\phi\in\Gamma(\mathbb S\mathbb H^3)$.
\end{proposition}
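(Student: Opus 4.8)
\textbf{Proof proposal for Proposition \ref{funclim0}.}

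The plan is to factor the integrand $\frac{H_0^2-H^2}{H+2} = (H_0-H)\cdot\frac{H_0+H}{H+2}$ and show that along a coordinate exhaustion the correction factor $\frac{H_0+H}{H+2}$ tends to $1$ uniformly on $\Sigma_\epsilon$, in a way that is compatible with the growth rates of $|\hat\phi|^2$ and $d\Sigma_\epsilon$ recorded in \eqref{partic} and \eqref{areagrowth}. First I would collect the asymptotics of the two mean curvatures. The mean curvature $H$ of $\Sigma_\epsilon\subset\Omega_\epsilon$ can be read off from the expansion \eqref{expan}--\eqref{asymexpcoll}: since $g=\sinh^{-2}\rho(d\rho^2+h_\rho)$, a direct computation of the second fundamental form of the level set $\rho=\epsilon$ gives $H = 2\cosh\epsilon + O(\epsilon^{?})$; more precisely one finds $H = 2 + O(\epsilon^2)$, with the $\rho^3$-term in \eqref{asymexpcoll} and its derivatives contributing only at order $O(\epsilon^3)$ because of the trace structure. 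On the hyperbolic side, the result of Kwong--Tam quoted in the Introduction says the Gauss curvature of $\Sigma_\epsilon$ is $K=\sinh^2\epsilon+O(\epsilon^5)$, and since the image under the Pogorelov embedding lies between geodesic spheres of radii $R_1,R_2$ with $\sinh R_i\sim 1/\epsilon$, the mean curvature $H_0$ of $\Sigma_\epsilon\subset\mathbb H^3$ likewise satisfies $H_0 = 2 + O(\epsilon^2)$ (a round sphere of radius $R$ in $\mathbb H^3$ has mean curvature $2\coth R = 2 + O(e^{-2R})$, and the almost-umbilicity forces the same leading behavior here).

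Granting these two facts, $H_0+H = 4 + O(\epsilon^2)$ and $H+2 = 4 + O(\epsilon^2)$, so
\[
\frac{H_0+H}{H+2} = 1 + O(\epsilon^2)
\]
uniformly on $\Sigma_\epsilon$. Therefore
\[
\int_{\Sigma_\epsilon}\frac{H_0^2-H^2}{H+2}|\hat\phi|^2\,d\Sigma_\epsilon
 - \int_{\Sigma_\epsilon}(H_0-H)|\hat\phi|^2\,d\Sigma_\epsilon
 = \int_{\Sigma_\epsilon}(H_0-H)\,O(\epsilon^2)\,|\hat\phi|^2\,d\Sigma_\epsilon .
\]
To see that the right-hand side tends to $0$ I would estimate crudely: $|H_0-H|\le |H_0-2|+|H-2| = O(\epsilon^2)$, while $|\hat\phi|^2 = O(\epsilon^{-1})$ by \eqref{partic} and $d\Sigma_\epsilon = (\epsilon^{-2}+O(1))\,d\mu_{h_0}$ by \eqref{areagrowth}; since $h_0$ is the round metric on $\mathbb S^2$, $\int_{\mathbb S^2}d\mu_{h_0}$ is a finite constant. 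Multiplying the orders gives $O(\epsilon^2)\cdot O(\epsilon^2)\cdot O(\epsilon^{-1})\cdot O(\epsilon^{-2}) = O(\epsilon)$, which vanishes as $\epsilon\to 0$. This proves \eqref{funclim20}, and in fact shows both sides equal the same limit (which by Theorem \ref{ktmain} and the identity \eqref{gives} is $-4\pi\langle\langle\Upsilon_{(M,g)},\eta\rangle\rangle$ when $\hat\phi=\phi^{(z)}$ and the embeddings are the Kwong--Tam ones, but for the proposition itself only the equality of the two limits is needed).

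The main obstacle is establishing the sharp asymptotics $H = 2 + O(\epsilon^2)$ and especially $H_0 = 2 + O(\epsilon^2)$; the former is a routine but slightly delicate computation with the expansion \eqref{asymexpcoll}, while the latter requires controlling the Pogorelov embedding well enough to upgrade the Gauss-curvature estimate $K = \sinh^2\epsilon + O(\epsilon^5)$ to a mean-curvature estimate. For this I would invoke the quantitative almost-umbilicity (or almost-roundness) estimates for surfaces in $\mathbb H^3$ with nearly constant Gauss curvature that underlie the Kwong--Tam analysis, or alternatively note that since both $H$ and $H_0$ converge to $2$ and the margin of error in the factor $\frac{H_0+H}{H+2}$ only needs to be $o(\epsilon^{-1})$ rather than $O(\epsilon^2)$ (because the product with $|H_0-H| = o(\epsilon^{-1})$ still needs to beat the $O(\epsilon^{-1})$ from $|\hat\phi|^2$ and $O(\epsilon^{-2})$ from the area), a cruder bound $\frac{H_0+H}{H+2} = 1 + o(1)$ together with $|H_0 - H| \to 0$ fast enough already suffices — so one could try to extract the needed rate directly from \cite{KT} without reproving the fine embedding estimates.
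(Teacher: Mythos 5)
Your argument is essentially the paper's: factor out $(H_0-H)$, show the correction factor goes to $1$ fast enough, and balance against the growth rates $|\hat\phi|^2 = O(\epsilon^{-1})$ and $d\Sigma_\epsilon = O(\epsilon^{-2})\,d\mu_{h_0}$. Two remarks. First, your closing worry about establishing the mean-curvature asymptotics is unfounded: Kwong--Tam prove precisely (and to higher order than you use) that $H = 2\cosh\epsilon - \frac{\epsilon^3}{2}\mathrm{tr}_{h_0}h + O(\epsilon^4)$ and $H_0 = 2\cosh\epsilon + O(\epsilon^5)$, and the paper simply cites these as \eqref{asymp1}--\eqref{asymp2}; there is nothing to "upgrade." Second, the paper uses a slightly different algebraic rearrangement: it exploits $\frac{H_0+2}{H+2} = 1 + O(\epsilon^3)$ together with $H_0 - 2 = O(\epsilon^2)$ to get $\frac{H_0^2-H^2}{H+2} = (H_0-H) + O(\epsilon^5)$, whereas your factorization $\frac{H_0+H}{H+2} = 1 + O(\epsilon^2)$ combined with your crude bound $|H_0-H| = O(\epsilon^2)$ gives an integrand error of only $O(\epsilon^4)$, hence an error integral of $O(\epsilon)$ rather than the paper's $O(\epsilon^2)$. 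Both suffice, but note that the [KT] asymptotics already yield the sharper $|H_0-H| = O(\epsilon^3)$ for free, so the crude triangle-inequality bound you invoke is giving away a power of $\epsilon$ unnecessarily.
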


\begin{proof}
It is shown in \cite{KT} that 
\begin{equation}\label{asymp1}
H=2\cosh \epsilon -\frac{\epsilon^3}{2}{\rm tr}_{h_0}h+O(\epsilon^4)=2+\epsilon^2 -\frac{\epsilon^3}{2}{\rm tr}_{h_0}h+O(\epsilon^4)
\end{equation}
and
\begin{equation}\label{asymp2}
H_0=2\cosh \epsilon +O(\epsilon^5)=2+{\epsilon^2}+o(\epsilon^4).
\end{equation}
This
readily gives
\[
\frac{H_0+2}{H+2}=1+O(\epsilon^3),
\]
so that
\begin{equation}\label{asymm}
\int_{\Sigma_\epsilon}\frac{H_0^2-H^2}{H+2}|\hat\phi|^2d\Sigma_\epsilon  =
     \int_{\Sigma_\epsilon}(H_0-H)|\hat\phi|^2d\Sigma_\epsilon +\int_{\Sigma_\epsilon}O(\epsilon^5)|\hat\phi|^2d\Sigma_\epsilon.
\end{equation}
From (\ref{areagrowth}) and (\ref{partic})
we have
\[
\int_{\Sigma_\epsilon}O(\epsilon^5)|\hat\phi|^2d\Sigma_\epsilon=O(\epsilon^2),
\]
so that (\ref{funclim20}) follows from (\ref{asymm}).
\end{proof}

\begin{proposition}\label{funclim}
If $(\Omega_\epsilon,\Sigma_\epsilon)$ is a coordinate exhaustion of an asymptotically hyperbolic manifold  then
\begin{equation}\label{funclim21l}
\lim_{\epsilon\to 0}\int_{\Sigma_\epsilon}\frac{\Delta_{\Sigma_{\epsilon}}|\hat\phi|^2}{H+2}d\Sigma_\epsilon=0,
\end{equation}
for any imaginary Killing spinor $\hat\phi\in\Gamma(\mathbb S\mathbb H^3)$.
\end{proposition}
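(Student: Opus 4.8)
The plan is to integrate by parts on the closed surface $\Sigma_\epsilon$ so as to trade the Laplacian for a gradient pairing, and then estimate each factor using the expansions already recorded. First I would note that $H>-2$ on $\Sigma_\epsilon$ for $\epsilon$ small, by (\ref{asymp1}), so $(H+2)^{-1}\nabla^{\Sigma_\epsilon}|\hat\phi|^2$ is a well-defined smooth tangent field on $\Sigma_\epsilon$; since $\Sigma_\epsilon$ has no boundary, its surface divergence integrates to zero, which gives
\[
\int_{\Sigma_\epsilon}\frac{\Delta_{\Sigma_\epsilon}|\hat\phi|^2}{H+2}\,d\Sigma_\epsilon=\int_{\Sigma_\epsilon}\frac{\langle\nabla^{\Sigma_\epsilon}H,\nabla^{\Sigma_\epsilon}|\hat\phi|^2\rangle_{h_\epsilon}}{(H+2)^2}\,d\Sigma_\epsilon.
\]
It then suffices to show the right-hand side tends to $0$ as $\epsilon\to0$.

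The bulk of the work is two gradient estimates. For $|\hat\phi|^2$: differentiating (\ref{baum4}) (or invoking (\ref{baum2}) of Lemma \ref{baum}) gives $|\nabla^0|\hat\phi|^2|_{g_0}\leq|\hat\phi|^2$ on $\mathbb H^3$; restricting to $\Sigma_\epsilon\subset\mathbb H^3$ and projecting tangentially only decreases the norm, so $|\nabla^{\Sigma_\epsilon}|\hat\phi|^2|_{h_\epsilon}\leq|\hat\phi|^2=O(\epsilon^{-1})$ by (\ref{partic}). For $H$: the key observation is that the term $2\cosh\epsilon$ in (\ref{asymp1}) is constant along $\Sigma_\epsilon$, so $\nabla^{\Sigma_\epsilon}H=\nabla^{\Sigma_\epsilon}(H-2\cosh\epsilon)=\nabla^{\Sigma_\epsilon}(-\tfrac{\epsilon^3}{2}{\rm tr}_{h_0}h+O(\epsilon^4))$; reading (\ref{asymp1}) with one tangential derivative, the differential of this function has size $O(\epsilon^3)$ measured in the fixed metric $h_0$. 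Converting to the $h_\epsilon$-norm by means of the estimate $h_\epsilon^{-1}=\epsilon^2h_0^{-1}+O(\epsilon^4)$, which follows from (\ref{meteps}), then yields $|\nabla^{\Sigma_\epsilon}H|_{h_\epsilon}=O(\epsilon^4)$.

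Finally I would combine these via Cauchy--Schwarz: since $H+2\to4$, the integrand in the displayed identity has pointwise size $O(\epsilon^4)\cdot O(\epsilon^{-1})=O(\epsilon^3)$, while by (\ref{areagrowth}) the total area of $\Sigma_\epsilon$ is $O(\epsilon^{-2})$; hence the integral is $O(\epsilon)$, which proves (\ref{funclim21l}). The only place that requires genuine care is the step upgrading the Kwong--Tam expansion (\ref{asymp1}) of $H$ to a bound on its tangential derivative along $\Sigma_\epsilon$, i.e.\ checking that the $O(\epsilon^4)$ remainder there stays $O(\epsilon^4)$ after one differentiation; this is exactly what the higher-order control on the remainder $e$ in (\ref{remainder}) is for. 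Everything else is the mechanism by which the $\epsilon^2$ gain produced by the conformal blow-up $h_\epsilon\sim\epsilon^{-2}h_0$ overcomes the $\epsilon^{-1}$ growth of $|\hat\phi|^2$ together with the $\epsilon^{-2}$ area growth.
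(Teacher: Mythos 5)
Your argument is correct and is essentially the paper's proof reorganized: you integrate by parts on $\Sigma_\epsilon$ at the outset and then observe that only the $-\tfrac{\epsilon^3}{2}{\rm tr}_{h_0}h$ piece of (\ref{asymp1}) survives tangential differentiation, whereas the paper expands $(H+2)^{-1}$ first, discards the terms constant on $\Sigma_\epsilon$ (since $\int_{\Sigma_\epsilon}\Delta_{\Sigma_\epsilon}f\,d\Sigma_\epsilon=0$) and only then integrates by parts on the $\epsilon^3{\rm tr}_{h_0}h$ contribution; the estimates you invoke from (\ref{baum2}), (\ref{partic}), (\ref{areagrowth}) and the metric rescaling $h_\epsilon\sim\epsilon^{-2}h_0$ are precisely those the paper uses and give the same $O(\epsilon)$ bound. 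You also correctly isolate the one genuinely delicate point shared by both routes, namely that the $O(\epsilon^4)$ remainder in the Kwong--Tam expansion must stay $O(\epsilon^4)$ after one tangential derivative, which is what the control (\ref{remainder}) on $e$ and its covariant derivatives is there to ensure.
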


\begin{proof}
It follows from  (\ref{asymp2})
that
\[
\frac{1}{H+2}=\frac{1}{4}-\frac{\epsilon^2}{16}+\frac{\epsilon^3}{32}{\rm tr}_{h_0}h+O(\epsilon^4),
\]
which
gives
\begin{equation}\label{prov}
\int_{\Sigma_\epsilon}\frac{\Delta_{\Sigma_\epsilon}|\hat\phi|^2}{H+2}d\Sigma_\epsilon  =  \frac{\epsilon^3}{32}
    \int_{\Sigma_\epsilon}{\rm tr}_{h_0}h\Delta_{\Sigma_\epsilon}|\hat\phi|^2d\Sigma_\epsilon + \int_{\Sigma_\epsilon}O(\epsilon^4)\Delta_{\Sigma_\epsilon}|\hat\phi|^2d\Sigma_\epsilon.
\end{equation}
Using (\ref{meteps}) and
(\ref{partic})
we obtain
$\Delta_{\Sigma_\epsilon}|\hat\phi|^2=\epsilon^2\Delta_{h_0}|\hat\phi|^2+O(\epsilon^2)$.
Combining this with  (\ref{areagrowth}) and (\ref{partic}) we get
\begin{eqnarray*}
\int_{\Sigma_\epsilon}{\rm tr}_{h_0}h\Delta_{\Sigma_\epsilon}|\hat\phi|^2d\Sigma_\epsilon
    & = & \int_{\mathbb S^2}{\rm tr}_{h_0}h\Delta_{h_0}|\hat\phi|^2d\mu_{h_0} + O(1)\\
    & = & -\int_{\mathbb S^2}\langle \nabla_{h_0}{\rm tr}_{h_0}h,\nabla_{h_0}|\hat\phi|^2\rangle d\mu_{h_0} + O(1).
\end{eqnarray*}
On the other hand, by (\ref{baum2}) we also have
\begin{eqnarray*}
|\nabla_{h_0}|\hat\phi|^2| & = & \epsilon^{-1}|\nabla_{\Sigma_\epsilon}|\hat\phi|^2|+O(\epsilon^{-2})\\
  & \leq &  \epsilon^{-1} |\nabla^0|\hat\phi|^2|+ O(\epsilon^{-2}) \\
  & = & O(\epsilon^{-2}),
\end{eqnarray*}
so that, by Cauchy-Schwarz, the first term in the right-hand side of (\ref{prov}) is $O(\epsilon)$. A similar argument shows that the second one is $O(\epsilon^2)$,
so the assertion follows.
\end{proof}

\appendix\section{The proof of Proposition \ref{bvpmr}}
\label{appendixA}

In this appendix we reproduce the argument in \cite{R} leading to the proof of Proposition \ref{bvpmr}. We start by recalling the integration by parts formula for the Dirac operator $D$ of a compact, spin $3$-manifold $(\Omega,g)$ with boundary $\Sigma$ oriented by its inward unit vector $\nu$:
\begin{equation}\label{green}
\int_\Omega\langle D\varphi,\psi\rangle dM-\int_\Omega\langle \varphi ,D\psi\rangle dM=-\int_\Sigma\langle \gamma(\nu)\phi,\psi\rangle d\Sigma, \quad \varphi,\psi\in\Gamma(\mathbb S\Omega).
\end{equation}

\begin{lemma}\label{bvp1}
Under the conditions above, the adjoint operator (in the $L^2$ sense) of the operator $\widetilde D^{\pm}$ with domain
\[
{\rm dom}\,\widetilde D^\pm=\left\{\psi\in\Gamma(\mathbb S\Omega); \psi_\mp=0\,{\rm on}\,\Sigma\right\},
\]
is
the operator $\widetilde D^{\mp}$ with domain
\[
{\rm dom}\,\widetilde D^\mp=\left\{\psi\in\Gamma(\mathbb S\Omega); \psi_\pm=0\,{\rm on}\,\Sigma\right\}.
\]
\end{lemma}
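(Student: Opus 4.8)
The plan is to compute the two sides of the Green-type identity (\ref{green}) on the subspaces of spinors satisfying the MIT bag conditions and read off which boundary term survives. First I would take $\varphi\in{\rm dom}\,\widetilde D^\pm$, so $\varphi_\mp=0$ on $\Sigma$, and an arbitrary $\psi\in\Gamma(\mathbb S\Omega)$, and write $\widetilde D^\pm=D\mp\tfrac{3i}{2}$ from (\ref{killdirexp}). Using (\ref{green}) together with the fact that multiplication by $i$ is formally skew-adjoint, one gets
\[
\int_\Omega\langle \widetilde D^\pm\varphi,\psi\rangle\,dM-\int_\Omega\langle \varphi,\widetilde D^\mp\psi\rangle\,dM=-\int_\Sigma\langle\gamma(\nu)\varphi,\psi\rangle\,d\Sigma,
\]
where the $\mp\tfrac{3i}{2}$ terms cancel precisely because the sign flips between $\widetilde D^\pm$ and $\widetilde D^\mp$ when $i$ is moved across the inner product; this identifies the candidate adjoint as $\widetilde D^\mp$ and reduces everything to analyzing the boundary pairing $\langle\gamma(\nu)\varphi,\psi\rangle$.

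Next I would decompose $\varphi=\varphi_++\varphi_-$ and $\psi=\psi_++\psi_-$ using the projections $P_\pm$ associated with the involution $i\gamma(\nu)$. The key algebraic observation is that $\gamma(\nu)$ anticommutes with $i\gamma(\nu)$ only up to the obvious sign, so $\gamma(\nu)$ maps the $\pm$-eigenbundle of $i\gamma(\nu)$ to the $\mp$-eigenbundle; equivalently $\gamma(\nu)P_\pm=P_\mp\gamma(\nu)$. Hence $\langle\gamma(\nu)\varphi,\psi\rangle=\langle\gamma(\nu)\varphi_+,\psi_-\rangle+\langle\gamma(\nu)\varphi_-,\psi_+\rangle$, the other two pairings vanishing pointwise. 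For $\varphi\in{\rm dom}\,\widetilde D^\pm$ we have $\varphi_\mp=0$, so only one term in this sum can be nonzero, and it involves $\psi_\pm$. Therefore the boundary integral vanishes for all $\varphi\in{\rm dom}\,\widetilde D^\pm$ if and only if $\psi_\pm=0$ on $\Sigma$, which is exactly the condition $\psi\in{\rm dom}\,\widetilde D^\mp$ as stated. This gives the inclusion that the adjoint of $\widetilde D^\pm$ (with the indicated domain) contains $\widetilde D^\mp$ (with the indicated domain).

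Finally I would argue the reverse inclusion, i.e.\ that the adjoint is not larger: any $\psi$ in the domain of the adjoint must, by the displayed identity, kill the boundary term $\int_\Sigma\langle\gamma(\nu)\varphi,\psi\rangle\,d\Sigma$ for every admissible $\varphi$, and since $\varphi_\pm$ ranges over all of $\Gamma(\mathbb S\Sigma|_{\text{appropriate eigenbundle}})$ (the boundary values being unconstrained on that eigenbundle, by standard surjectivity of the restriction map for smooth spinors), the nondegeneracy of the Hermitian pairing forces $\psi_\pm=0$ on $\Sigma$; one should also note the elliptic regularity point that an $L^2$ element of the adjoint domain is automatically smooth up to the boundary, so the pointwise boundary condition makes sense. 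The step I expect to require the most care is this last one: making precise that the boundary values $\varphi_\pm|_\Sigma$ exhaust enough test spinors and that the adjoint domain consists of genuinely smooth (not merely weak) solutions, so that the formal computation with (\ref{green}) legitimately pins down the domain and not just the differential expression. Everything else is the routine sign-bookkeeping in (\ref{killdirexp}) and the eigenbundle combinatorics of $P_\pm$ and $\gamma(\nu)$.
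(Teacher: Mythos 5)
Your overall approach is the same as the paper's: apply the Green identity (\ref{green}) to $\widetilde D^{\pm}$, check that the bulk $\pm\tfrac{3i}{2}$ terms cancel to identify the formal adjoint as $\widetilde D^{\mp}$, and then analyze the boundary pairing to read off the domain. The paper does exactly this, and your extra remark about surjectivity of the restriction map and elliptic regularity is a reasonable (though, in the paper, tacit) precision of the same argument.

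However, there is a genuine algebraic error in the step where you analyze the boundary term. You assert that $\gamma(\nu)P_\pm = P_\mp\gamma(\nu)$, i.e.\ that $\gamma(\nu)$ interchanges the eigenbundles of $i\gamma(\nu)$. This is false: $\gamma(\nu)$ is a scalar multiple of $i\gamma(\nu)$ itself, so it \emph{commutes} with $i\gamma(\nu)$ and hence with $P_\pm$; explicitly, $\gamma(\nu)$ acts as $\mp i$ on the $\pm$-eigenbundle, so $\gamma(\nu)P_\pm = P_\pm\gamma(\nu)$. Consequently the boundary pairing decomposes as
\[
\langle\gamma(\nu)\varphi,\psi\rangle=\langle\gamma(\nu)\varphi_+,\psi_+\rangle+\langle\gamma(\nu)\varphi_-,\psi_-\rangle,
\]
not as the cross pairing $\langle\gamma(\nu)\varphi_+,\psi_-\rangle+\langle\gamma(\nu)\varphi_-,\psi_+\rangle$ that you wrote. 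Notice moreover that if one followed your stated (incorrect) decomposition consistently, then for $\varphi\in\mathrm{dom}\,\widetilde D^{\pm}$ (so $\varphi_\mp=0$) the surviving term would be $\langle\gamma(\nu)\varphi_\pm,\psi_\mp\rangle$, which involves $\psi_\mp$, and one would wrongly conclude $\psi_\mp=0$. You instead wrote that the surviving term ``involves $\psi_\pm$'', which is what one gets from the \emph{correct} commutation; so two slips cancel and the final statement lands on the right condition, but the intermediate reasoning is internally inconsistent. With the correct identity $\gamma(\nu)\varphi_\pm=\mp i\varphi_\pm$, the boundary term becomes $\mp i\int_\Sigma\langle\varphi_\pm,\psi_\pm\rangle\,d\Sigma$ (this is precisely the step the paper records as $-\int_\Sigma\langle\gamma(\nu)\varphi,\psi\rangle\,d\Sigma=\mp\int_\Sigma\langle i\varphi,\psi\rangle\,d\Sigma$), and since $\varphi_\pm$ is free on $\Sigma$ this forces $\psi_\pm=0$, as claimed.
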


\begin{proof}
Take $\varphi\in {\rm dom}\,\widetilde D^\pm$ and $\psi\in\Gamma(\mathbb S\Omega)$. From (\ref{green}) we have
\[
\int_\Omega\langle \widetilde D^\pm\varphi,\psi\rangle dM-\int_\Omega\langle \varphi ,\widetilde D^\mp\psi\rangle dM=-\int_\Sigma\langle \gamma(\nu)\varphi,\psi\rangle d\Sigma=\mp\int_\Sigma\langle i\varphi,\psi\rangle d\Sigma,
\]
so if $\varphi$ varies over the space of spinors vanishing on $\Sigma$, we see that
\[
(\widetilde D^{\pm})^*\psi=\widetilde D^\mp\psi, \quad \psi\in {\rm dom}\,\widetilde D^\pm.
\]
As a consequence, $\int_\Sigma\langle i\varphi,\psi\rangle d\Sigma=0$ for any $\varphi\in\Gamma(\mathbb S\Omega)$ with $\varphi_\mp=0$, which gives $\psi_\pm=0$, as desired.
\end{proof}

\begin{lemma}\label{bvp2}
If $R_g\geq -6$ and $H\geq -2$ then at least one of the following assertions holds true:
\begin{enumerate}
 \item The homogeneous boundary value problem
 \[
\left\{
\begin{array}{rclc}
\widetilde D^\pm\psi & = &  0 & {\rm on}\,\Omega \\
\psi_\pm & = &  0 & {\rm on}\,\Sigma
\end{array}
\right.
\]
has no nontrivial solution;
  \item There exists a nontrivial imaginary Killing spinor on $\Omega$ and $H\equiv -2$ on $\Sigma$.
\end{enumerate}
\end{lemma}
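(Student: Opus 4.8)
The plan is to feed the integral identity \eqref{parts3} (equivalently \eqref{partshyp}) with a solution $\psi$ of the homogeneous boundary value problem and extract a sign. Suppose $\widetilde D^{\pm}\psi=0$ on $\Omega$ with $\psi_{\pm}=0$ along $\Sigma$; I must show that either $\psi\equiv 0$ or $\psi$ is an imaginary Killing spinor with $H\equiv -2$. Since $\widetilde D^{\pm}\psi=0$, the left-hand side of \eqref{parts3} reads $\int_\Omega\bigl(|\widetilde\nabla^{\pm}\psi|^2+\tfrac{R_g+6}{4}|\psi|^2\bigr)d\Omega$, which is $\geq 0$ under $R_g\geq -6$. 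So it suffices to show the boundary term on the right of \eqref{parts3} is $\leq 0$ when $\psi_{\pm}=0$, and to analyze the equality case.

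For the boundary term, first observe that when $\psi_{\pm}=0$ one has $\psi=\psi_{\mp}$ along $\Sigma$, so $i\gamma(\nu)\psi=\mp\psi$ and hence $\langle\widetilde D^\Sigma_{\pm}\psi,\psi\rangle=\langle D^\Sigma\psi,\psi\rangle\mp|\psi|^2$ — but wait, I should be careful about which sign of $\widetilde D^\Sigma$ appears in \eqref{parts3}. Following the pattern of \eqref{preli}, the $+$-equation in the bulk produces $\widetilde D^\Sigma_-$ on the boundary; with $\psi^+_+=0$ (i.e. $\psi=\psi^+_-$) we have $i\gamma(\nu)\psi=-\psi$, so $\langle\widetilde D^\Sigma_-\psi,\psi\rangle=\langle D^\Sigma\psi,\psi\rangle-|\psi|^2$, and by \eqref{cancell} the term $\langle D^\Sigma\psi_-,\psi_-\rangle$ vanishes. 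Thus the boundary integrand collapses to $-|\psi|^2-\tfrac{H}{2}|\psi|^2=-\tfrac{H+2}{2}|\psi|^2$, which is $\leq 0$ precisely because $H\geq -2$. The mirror computation handles the $-$-equation (where $\psi=\psi^-_+$, $i\gamma(\nu)\psi=+\psi$, and $\widetilde D^\Sigma_+$ appears). Combining, \eqref{parts3} gives
\[
\int_\Omega\Bigl(|\widetilde\nabla^{\pm}\psi|^2+\frac{R_g+6}{4}|\psi|^2\Bigr)d\Omega
=-\int_\Sigma\frac{H+2}{2}|\psi|^2\,d\Sigma,
\]
with the left side $\geq 0$ and the right side $\leq 0$, forcing both to vanish.

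From the vanishing of the left side I get $\widetilde\nabla^{\pm}\psi\equiv 0$, i.e. $\psi$ is an imaginary Killing spinor on $\Omega$; in particular $|\psi|^2$ satisfies the Obata-type equation of Lemma \ref{baum} (adapted to $(\Omega,g)$), so $\psi$ either vanishes identically or is nowhere zero — if $\psi$ is nontrivial then $|\psi|^2>0$ somewhere, and from the vanishing of the right-hand boundary integral together with $H\geq-2$ we conclude $H\equiv -2$ on the portion of $\Sigma$ where $\psi\neq 0$; a short unique-continuation-type remark (or just continuity of $|\psi|^2$, which cannot vanish on an open set for an imaginary Killing spinor) upgrades this to $H\equiv -2$ on all of $\Sigma$. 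This yields alternative (2). I expect the only genuinely delicate point to be the bookkeeping of the $\pm$ signs — matching the sign of the bulk Killing-Dirac equation to the correct projector $P_{\pm}$ and the correct boundary operator $\widetilde D^\Sigma_{\mp}$ in \eqref{parts3}, exactly as in \eqref{preli}–\eqref{first} — since an error there flips the inequality; the analytic content is otherwise just the Lichnerowicz identity plus $R_g\geq-6$ and $H\geq-2$.
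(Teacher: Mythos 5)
Your strategy matches the paper's almost exactly: the paper applies Proposition \ref{montneg} (which, for a Killing--harmonic $\psi$, is just \eqref{parts3} with $\widetilde D^\pm\psi=0$ plus $R_g\geq-6$), shows the boundary condition $\psi_\pm=0$ forces the boundary term to be nonpositive, and then concludes from the case of equality that $\psi$ is an imaginary Killing spinor, which has no zeros, so $H\equiv -2$. Your route via \eqref{parts3} directly is marginally cleaner because it yields $\widetilde\nabla^\pm\psi\equiv 0$ at once instead of first passing through the twistor condition and then combining with the Dirac eigenvalue; otherwise the two arguments are the same.

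One bookkeeping slip, though, is worth flagging precisely because you anticipated it as the delicate point. For $\psi=\psi_-$ (so $i\gamma(\nu)\psi=-\psi$) one has
\[
\langle\widetilde D^\Sigma_-\psi,\psi\rangle=\langle D^\Sigma\psi,\psi\rangle-\langle i\gamma(\nu)\psi,\psi\rangle=\langle D^\Sigma\psi,\psi\rangle+|\psi|^2,
\]
not $\langle D^\Sigma\psi,\psi\rangle-|\psi|^2$ as you wrote. The reason your final display is nonetheless correct is that \eqref{parts3} actually pairs the bulk operator $\widetilde D^+$ with the boundary operator $\widetilde D^\Sigma_+$, not $\widetilde D^\Sigma_-$ (the $+/-$ labels in \eqref{parts3} and \eqref{reillyhyp2} match; the ``$\widetilde D^\Sigma_-$'' in \eqref{preli} is a typo in the paper, as the very next line there computes $\langle\widetilde D^\Sigma\psi^+,\psi^+\rangle=\langle D^\Sigma\psi^+,\psi^+\rangle+|\psi^+_+|^2-|\psi^+_-|^2$, which is the formula for $\widetilde D^\Sigma_+$). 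With the correct $\widetilde D^\Sigma_+$ one gets $\langle\widetilde D^\Sigma_+\psi_-,\psi_-\rangle=-|\psi_-|^2$, and the boundary integrand is $-\tfrac{H+2}{2}|\psi|^2$ as you stated. So your two sign errors cancel; the identity, the dichotomy, and the nowhere-vanishing argument for $H\equiv-2$ are all correct.
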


\begin{proof}
If $\psi$ is a nontrivial solution of the boundary value problem then from (\ref{reillyhyp2})
we obtain
\begin{equation}\label{port}
\int_\Sigma\left(\langle \widetilde D^\Sigma_\pm\psi,\psi\rangle-\frac{H}{2}|\psi|^2\right)d\Sigma\geq 0.
\end{equation}
But the boundary condition clearly implies
\[
\int_\Sigma \langle \widetilde D^{\Sigma}_\pm\psi,\psi\rangle d\Sigma=-\int_\Sigma |\psi_\mp|^2d\Sigma,
\]
that is, equality holds in (\ref{port}). In this case, $R_g\equiv -6$ and $\psi$ is a twistor spinor on $\Omega$. Since $\psi$ is also an eigenvector of $D$, we conclude that $\psi$ is in fact an imaginary Killing spinor. It is well-known  that $\psi$ has no zeros \cite{BFGK}  and this implies that $H\equiv -2$ along $\Sigma$.
\end{proof}

\begin{lemma}\label{bvp3}
If $R_g\geq -6$ and $H\geq -2$ then at least one of the following assertions holds true:
\begin{enumerate}
 \item The nonhomogeneous boundary value problem
 \[
\left\{
\begin{array}{rclc}
\widetilde D^\pm\psi & = &  0 & {\rm on}\,\Omega \\
\psi_\pm & = &  \phi_\pm & {\rm on}\,\Sigma
\end{array}
\right.
\]
has a unique solution for each $\phi\in\Gamma(\mathbb S\Sigma)$;
  \item There exists a nontrivial imaginary Killing spinor on $\Omega$ and $H\equiv -2$ on $\Sigma$.
\end{enumerate}
\end{lemma}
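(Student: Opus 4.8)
The plan is to deduce this statement from the previous two lemmas by a standard Fredholm argument. The operator $\widetilde D^{\pm}$ with the domain specified in Lemma \ref{bvp1} is, by the theory of elliptic boundary value problems (the MIT bag condition $P_{\pm}$ being a local elliptic boundary condition for $\widetilde D^{\pm}$), a Fredholm operator between the appropriate Sobolev spaces, and by Lemma \ref{bvp1} its $L^{2}$-adjoint is $\widetilde D^{\mp}$ with the complementary local elliptic boundary condition $P_{\mp}$. In particular the index of $\widetilde D^{\pm}$ with boundary condition $P_{\pm}$ equals minus the index of $\widetilde D^{\mp}$ with boundary condition $P_{\mp}$, and by a conjugation/symmetry argument (replacing $i$ by $-i$, or composing with $\gamma(\nu)$) one sees that these two boundary problems are equivalent up to a unitary identification, so the common index is zero. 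Hence $\widetilde D^{\pm}$ with boundary condition $P_{\pm}$ has vanishing index, and it is an isomorphism if and only if it is injective.

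Next I would reduce the nonhomogeneous problem in the statement to a homogeneous one. Given $\phi\in\Gamma(\mathbb S\Sigma)$, extend it to a smooth spinor $\Phi$ on $\Omega$ with $\Phi_{\pm}=\phi_{\pm}$ on $\Sigma$; then $\psi$ solves the problem in the statement if and only if $\chi:=\psi-\Phi$ solves $\widetilde D^{\pm}\chi=-\widetilde D^{\pm}\Phi$ on $\Omega$ with $\chi_{\pm}=0$ on $\Sigma$, i.e.\ $\chi$ lies in $\mathrm{dom}\,\widetilde D^{\pm}$ and is a preimage of the prescribed $L^{2}$ datum. Since $\widetilde D^{\pm}$ with this boundary condition has index zero, solvability for every datum is equivalent to injectivity, i.e.\ to the statement that the homogeneous problem of Lemma \ref{bvp2} has only the trivial solution; and when a solution exists it is unique, again by injectivity. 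Invoking Lemma \ref{bvp2}, either that homogeneous problem has no nontrivial solution—in which case alternative (1) holds—or there exists a nontrivial imaginary Killing spinor on $\Omega$ with $H\equiv -2$ on $\Sigma$, which is alternative (2). Finally, elliptic regularity for the local boundary condition upgrades any $L^{2}$ (or $H^{1}$) solution to a smooth one, so the solution produced is genuinely in $\Gamma(\mathbb S\Omega)$.

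The main obstacle, and the only point requiring genuine care, is establishing that $\widetilde D^{\pm}$ equipped with the MIT bag condition $P_{\pm}$ is Fredholm of index zero; this is where the ellipticity of the boundary condition (the Lopatinski--Shapiro condition, equivalently the statement that $i\gamma(\nu)$ is a self-adjoint involution anticommuting with the boundary symbol of $D^{\Sigma}$) and the self-adjointness pairing of Lemma \ref{bvp1} are used together. Once that structural fact is in hand, the rest is the soft Fredholm dichotomy "index zero $\Rightarrow$ (surjective $\iff$ injective)" combined directly with Lemma \ref{bvp2}, plus routine elliptic regularity for the smoothness claim.
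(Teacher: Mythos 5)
Your proof is correct in outline and uses the same basic machinery (ellipticity of the MIT bag condition, Fredholmness, Lemma \ref{bvp1} for adjoints, Lemma \ref{bvp2} for kernel triviality), but it takes a slightly different and, at one point, unnecessarily roundabout route. You argue for \emph{index zero} of the realization $(\widetilde D^{\pm},P_{\pm})$ by appealing to a ``conjugation/symmetry'' equivalence between the $+$ and $-$ problems, and then deduce surjectivity from injectivity. That symmetry claim is plausible (and ultimately true for MIT bag conditions), but you merely assert it; making it precise requires exhibiting a concrete (anti-)unitary intertwiner for the spin bundle in dimension $3$, which is not trivial and is not done. The paper avoids this entirely by observing that Lemma \ref{bvp2} applies to \emph{both} signs: triviality of $\ker(\widetilde D^{\pm},P_{\pm})$ gives, by the general Fredholm identity ${\rm coker}\,T\cong\ker T^{*}$ and Lemma \ref{bvp1}, triviality of ${\rm coker}(\widetilde D^{\mp},P_{\mp})$, while triviality of $\ker(\widetilde D^{\mp},P_{\mp})$ is again Lemma \ref{bvp2} with the opposite sign. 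Thus both kernel and cokernel vanish directly, and the index is zero as a by-product rather than as a hypothesis to be established first. In short: your detour through the unconditional index computation is an extra step whose justification is missing, and you should replace it by the two-sided use of Lemma \ref{bvp2} plus the adjoint identification. On the other hand, your explicit reduction of the nonhomogeneous boundary-data problem to a nonhomogeneous equation with homogeneous boundary data (via an extension $\Phi$ of $\phi$) is a genuinely useful detail that the paper glosses over, and the closing appeal to elliptic regularity for smoothness is correct.
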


\begin{proof}
Ellipticity of  $\widetilde D^\pm$ implies that its realization under the given boundary value conditions is Fredholm \cite[Chapter 19]{BoW}. If the first item in the previous lemma holds then
\[
\ker (\widetilde D^\pm,P_\pm)=\{0\},
\]
so that
\[
{\rm coker}\, (\widetilde D^\pm,P_\pm)^*=\{0\}.
\]
By Lemma \ref{bvp1}, this means that
\[
{\rm coker}\, (\widetilde D^\mp,P_\mp)=\{0\},
\]
that is, the first item in the lemma holds.
\end{proof}

Proposition \ref{bvpmr} is an immediate consequence of Lemma \ref{bvp3}.

%
%




\end{document}